\documentclass[11pt]{amsart}
\usepackage[a4paper,margin=28.5mm]{geometry}
\usepackage[T1]{fontenc}
\usepackage{lmodern}
\usepackage{amsmath,amssymb,amsthm,mathtools}
\usepackage{mathrsfs}
\usepackage{microtype}
\usepackage{enumitem}
\usepackage{booktabs}
\usepackage{xcolor}
\usepackage[colorlinks=true,linkcolor=blue!55!black,
 citecolor=blue!55!black,urlcolor=blue!55!black]{hyperref}
\newtheorem{theorem}{Theorem}[section]
\newtheorem{proposition}[theorem]{Proposition}
\newtheorem{lemma}[theorem]{Lemma}
\newtheorem{corollary}[theorem]{Corollary}
\theoremstyle{definition}
\newtheorem{definition}[theorem]{Definition}
\newtheorem{algorithm}[theorem]{Algorithm}

\theoremstyle{remark}
\newtheorem{remark}[theorem]{Remark}

\newcommand{\Br}{\operatorname{Br}}
\newcommand{\cl}[1]{\widehat{#1}}
\newcommand{\iso}{\cong}
\newcommand{\GT}{\mathcal T}
\newcommand{\ind}{\mathbf{1}}

\setlist[enumerate]{label=\textup{(\arabic*)},leftmargin=*,itemsep=4pt}
\setlist[itemize]{leftmargin=*,itemsep=3pt}
\numberwithin{equation}{section}
\allowdisplaybreaks[2]
\title[Polynomial-time algorithms for links]
{Polynomial-time algorithms for links}
\author{Thiago de Paiva}
\address[]{Beijing International Center for Mathematical Research, Peking University, Beijing 100871, China P.R.}
\email[]{thhiagodepaiva@gmail.com}
\author{Yi Liu}
\address{Beijing International Center for Mathematical Research, Peking University, Beijing 100871, China P.R.}
\email{liuyi@bicmr.pku.edu.cn}
\date{}
\subjclass[2020]{57K10, 20F36, 68Q25}
\keywords{Generalised T-link, braid word, polynomial-time algorithm, link invariant}
\hypersetup{pdfauthor={Thiago de Paiva and Yi Liu},
 pdftitle={Polynomial-time algorithms for links},
 pdfsubject={Explicit braid conversion, complexity bounds, and polynomial-time transfer}}

\begin{document}

\begin{abstract}
Every link in the \(3\)-sphere admits a generalised \(T\)-link
presentation. In this paper, we give an explicit polynomial-time algorithm
which, given a signed Artin braid word and the number of
strands, computes such a presentation. Combining this
construction with Yamada--Vogel braiding gives a polynomial-time
conversion from arbitrary oriented combinatorial link diagrams
to generalised \(T\)-link parameters. We prove polynomial bounds on the number of strands and
crossings in the resulting braid.

As an application, we prove that a link property or a link
invariant is computable in polynomial time from diagrams if
and only if the corresponding problem is solvable in polynomial
time from generalised \(T\)-link parameters, with parameter input measured by the total number of strands
and crossings in the corresponding braid diagram. Whenever a classification criterion
or an explicit invariant formula is available in these
parameters, it can be applied to the original link by
substituting the parameters obtained from the diagram.
A polynomial-time test or evaluation then gives a polynomial-time
procedure on diagrams. We also establish the corresponding transfer for approximating
real-valued invariants to prescribed accuracy.
\end{abstract}

\maketitle

\section{Introduction}\label{sec:introduction}

%Ghrist proved that certain Lorenz-like templates are universal:
%each carries a representative of every link in \(S^3\)
%\cite[Theorem~3 and Corollary~3]{Ghrist1997}.
%His proof is constructive: it starts from an arbitrary braid
%and places an isotopic copy of its closure on a universal template.
%De Paiva, Hui, and Rodr\'iguez Migueles established an explicit
%correspondence between links carried by the corresponding
%Lorenz-like templates and generalised \(T\)-link presentations
%\cite[Theorem~3.10 and Corollary~3.13]
%{dePaivaHuiRodriguez}.
%Together, these results give a constructive route from an arbitrary
%braid to a generalised \(T\)-link presentation.

%In Subsection~\ref{subsec:ghrist-comparison}, we analyse
%Ghrist's successive-ear construction, with the ear placement
%specified there and the resulting symbolic itineraries written
%in full. For a family of three-strand inputs, this procedure
%produces exponentially long itineraries. Consequently, the
%construction with this expanded output is not polynomial-time
%in the input word length. 

Generalised \(T\)-links represent all links in \(S^3\)
\cite[Corollary~3.13]{dePaivaHuiRodriguez}.
Their general definition includes an integer \(d\geq0\), called
the offset, which records \(d\) additional split unknotted
components \cite[Definition~3.8]{dePaivaHuiRodriguez}.
Throughout this paper, we use presentations with \(d=0\).

Set
\[
 \delta_r=\sigma_1\sigma_2\cdots\sigma_{r-1}.
\]
With this convention, a generalised \(T\)-link is the closure,
on exactly \(r_k\) strands, of
\begin{equation}\label{eq:intro-braid}
 \delta_{r_1}^{s_1}\delta_{r_2}^{s_2}\cdots\delta_{r_k}^{s_k},
 \qquad 2\leq r_1<\cdots<r_k,
\end{equation}
where all exponents before the last one are positive, and the
last exponent is an integer different from \(-1,0,1\).
When the last exponent is positive, this is the ordinary
\(T\)-link form introduced by Birman and Kofman in their study
of Lorenz links \cite{BirmanKofman}.

This result gives a common parameter description for families
of knots and links studied in different settings. It raises
the following computational question: given a signed Artin
braid word and the number of strands, can we compute, in
polynomial time, generalised \(T\)-link parameters representing
its closure, and how large is the resulting presentation?

In this paper, we give an explicit algorithm which takes an
Artin braid word together with its number of strands and
returns generalised \(T\)-link parameters representing its
oriented closure. We prove polynomial bounds on both its
running time and the number of strands and crossings in the
resulting braid, in terms of the input strand number and
word length.

Our input consists of a strand number \(p\) and an expanded
Artin word
\[
 \beta=\sigma_{i_1}^{\epsilon_1}\cdots
        \sigma_{i_c}^{\epsilon_c}\in\Br_p,
 \qquad \epsilon_j\in\{1,-1\}.
\]
The integer \(c\) is the number of letters in this word.
In particular, it is the crossing number of the specified
braid diagram. The strand number \(p\) is part of the input
because an Artin word alone does not specify how many strands
are present. The same word may be regarded as a braid on more
strands by adding trivial strands on the right; each additional
strand contributes a split unknotted component to the closure.
Our conversion preserves the entire oriented link, including
these components, and produces a presentation with \(d=0\)
even when the input link is split.

Our main result is the following.

\begin{theorem}\label{thm:main}
There is a deterministic algorithm which, given a finite Artin
word and the number of strands, returns a generalised
\(T\)-link presentation of its oriented closure.
For \(p\geq3\) and \(c\geq1\), write the output as
\[
 \GT(\beta)=((r_1,s_1),\ldots,(r_k,s_k)),\qquad M=r_k.
\]
Then
\begin{equation}\label{eq:main-size}
 k\leq3c,\qquad M\leq\frac{pc(c+3)}2.
\end{equation}
All strand parameters increase strictly, every exponent before
the last is positive, and \(|s_k|\geq2\).

For these inputs, the algorithm computes the parameters,
written in binary, using \(O(c)\) integer operations and
storing \(O(c)\) integers. Put
\begin{equation}\label{eq:L}
 L=\left\lceil\log_2\bigl(2+p(c+1)\bigr)\right\rceil.
\end{equation}
In the computational model specified in
Section~\ref{subsec:model}, the computation takes
\(O(cL+L^2)\) bit operations and uses \(O(cL)\) bits of storage.
\end{theorem}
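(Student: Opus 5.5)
The approach is to build the presentation letter by letter. Each letter is absorbed into a product of powers of the $\delta_r$ with non-decreasing strand parameters. The only moves used are conjugation, Markov (de)stabilisation on the right, and the centrality of the full twist $\delta_r^{\,r}=\Delta_r^2$ in $\Br_r$. None of these moves changes the oriented closure.

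The invariant after reading $j$ letters is a word
\[
W_j=\delta_{r_1}^{s_1}\cdots\delta_{r_m}^{s_m}\cdot \tau_j,
\]
on $N_j$ strands, with $r_m=N_j$, all $s_i>0$, and $\tau_j$ a word in the remaining letters of $\beta$, relabelled by a shift. Its closure must equal the closure of $\beta$. I will also keep an integer $e_j\le0$ recording a negative power of the full twist on $N_j$ strands, to be placed at the very end.

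\textbf{Step 1 (tools).} I first isolate three elementary lemmas.
\begin{enumerate}
\item Shift: $\delta_N\sigma_i\delta_N^{-1}=\sigma_{i+1}$ for $i\le N-2$. Hence a positive power of $\delta_N$ moves a letter to the top position $\sigma_{N-1}$, at the cost of a conjugation. The conjugation is cyclically moved to the other end of the word, where it merges with the last factor.
\item Growth: $\delta_{N}\sigma_{N}=\delta_{N+1}$, so $\delta_N^{s}\sigma_N$ can be rewritten as $\delta_N^{s-1}\delta_{N+1}$. Markov stabilisation supplies this new strand.
\item Inversion: in $\Br_N$ we have $\delta_N^{-1}=\delta_N^{N-1}\Delta_N^{-2}$, and $\Delta_N^{-2}$ is central. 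A negative letter $\sigma_i^{-1}$ is therefore written as a positive word times $\Delta_N^{-2}$, and the central factor is pushed to the end.
\end{enumerate}
Central full twists on $N_j$ strands stop being central once the strand count grows. So before each growth step I re-express the accumulated $\Delta_{N_j}^{2e_j}$ as a positive power of $\delta$ on a larger number of strands, by conjugating through the new strands. This is the source of the extra strands.

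\textbf{Step 2 (bookkeeping).} Each letter creates at most three new factors: one shift conjugation, one growth factor, and one full-twist correction. This gives $k\le3c$. Processing letter $j$ uses at most $p(j+1)$ new strands. Indeed, at most $p$ stabilisations are needed to bring the relevant strands to the top, and the full-twist correction must be pushed through at most $pj$ strands. Summing,
\[
M\le\sum_{j=1}^{c}p(j+1)=\frac{pc(c+3)}2 .
\]
At the end the residual $\Delta_M^{2e}$ becomes $\delta_M^{Me}$. It is merged with the last factor, whose exponent may now be negative. If the last exponent is $0$ or $\pm1$, one more stabilisation $\delta_M^{\pm1}\mapsto\delta_{M+1}^{\pm2}$ (or the analogous merge with the previous factor) forces $|s_k|\ge2$. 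Trivial strands added on the right are kept as split unknots throughout, so that $d=0$.

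\textbf{Step 3 (complexity).} The algorithm only updates a constant number of integers per letter: current strand count, shift, accumulated twist, and the new pair $(r,s)$. This gives $O(c)$ integer operations and $O(c)$ stored integers. Every integer is bounded by $M$ or by $M\cdot c$, hence has $O(L)$ bits, so each addition or comparison costs $O(L)$ bit operations. The single final multiplication $M\cdot e$ costs $O(L^2)$. This yields $O(cL+L^2)$ time and $O(cL)$ space.

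The main obstacle is Step 1(3) together with the growth step: making the negative full twists compatible with the increasing strand counts without losing positivity of the earlier exponents. This is also where the quadratic strand bound is decided. I would first try to always defer the twist to the final factor, by conjugating it to the far right before each stabilisation, and then verify by an explicit braid identity how many extra strands this costs.
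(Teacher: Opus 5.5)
There is a genuine gap. Nothing in your Step~1 actually absorbs an input letter into a product of $\delta$-powers of increasing width, and the point you defer as the ``main obstacle'' is where the whole proof lies.

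\textbf{Shift step.} Writing $\sigma_i$ as a $\delta_N$-conjugate of $\sigma_{N-1}$ leaves a power of $\delta_N$ on each side of the letter. Even if the left one is merged positively into the last factor, the right one must be pushed through $\tau_j$. But $\delta_N\sigma_{N-1}\delta_N^{-1}$ is not an Artin generator (its permutation is the transposition $(1\,N)$), so $\tau_j$ is not merely ``relabelled by a shift''.

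\textbf{Growth step.} The identity $\delta_N\sigma_N=\delta_{N+1}$ consumes the letter created by the stabilisation, not the input letter. The input letter now sits at $\sigma_{N-1}$, and $\delta_N^{s}\sigma_{N-1}$ is not of the required form.

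\textbf{Inversion step.} Here $\Delta_N^2\sigma_i^{-1}$ is a positive word of length $N(N-1)-1$ that would itself have to be absorbed, contradicting ``three new factors per letter''. Also, conjugation preserves exponent sum, so no conjugation turns $\Delta_N^{2e}$ with $e<0$ into positive $\delta$-powers. You give no identity saying which stabilisations would.

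\textbf{Positive inputs.} In your scheme negative exponents arise only from negative letters. An all-positive input would therefore come out as an ordinary $T$-link, hence a Lorenz link by Birman--Kofman. Since Lorenz knots are prime (Williams), this already fails for $\cl{\sigma_1^3\sigma_2^3}$, a connected sum of two trefoils. Positive letters must also be processed through \emph{negative} stabilisations.

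\textbf{Normalisation.} The move $\delta_M^{\pm1}\mapsto\delta_{M+1}^{\pm2}$ is false. For example, $\operatorname{cl}_2(\sigma_1)$ is the unknot while $\operatorname{cl}_3(\delta_3^2)$ is the trefoil. The correct identity $\delta_M\sigma_M=\delta_{M+1}$ keeps the exponent equal to one.

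The missing idea is the paper's construction.
\begin{itemize}
\item By Lemma~\ref{lem:letters} and \eqref{eq:unified-letter}, every signed letter is $E_p^{a_j}E_{q_j}E_p^{-a_j-t_j}$. Combining the $E_p$-powers cyclically and reducing the gaps modulo $p$ gives the closure of $E_{u_1}^{v_1}\cdots E_{u_m}^{v_m}\delta_p^{pF}$. Here the widths come in arbitrary order and there is a single central factor.
\item The reverse of the negative reduction, $\cl{B\delta_P^{-q}}\iso\cl{E_q^{P-u}B\delta_u^{-q}}$ (Lemma~\ref{lem:one-strand}, Corollary~\ref{cor:negative-reduction}), trades every prefix block for negative stabilisations. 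By Proposition~\ref{prop:prefix} this gives $\delta_p^{pF+u_1}\prod_j\delta_{p+V_j}^{u_{j+1}-u_j}\delta_{p+H}^{-u_m}$.
\item Negative intermediate exponents are removed by splitting off full twists $E_r^r$ and applying the same formula once more. The last exponent is then $-u_m$ or $-r_\kappa$, automatically at most $-2$.
\end{itemize}

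The bounds are then derived, not assigned per letter. First, $k=m+K\leq 2c+c$. Second, $M=Kp+H+\sum_{y\neq x}D_x(y)$ depends on the cyclic cut. An arbitrary descent cut only gives $M\leq Kp+KH\leq pc(c+1)$, and the constant $pc(c+3)/2$ needs the averaging argument of Proposition~\ref{prop:cut-bound} with $K\leq c$ and $H\leq pc$. Your per-letter counts ($p$ stabilisations, $pj$ strands) are chosen to sum to the target rather than proved.

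Your Step~3 has the right shape: constant work per block, plus a constant number of multiplications and divisions giving the $L^2$ term. But it presupposes closed-form parameters such as \eqref{eq:direct-formula}, which is exactly what your Steps~1--2 fail to produce.
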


The main construction is based on repeated applications, in the
reverse direction, of the negative braid-reduction isotopy of
de Paiva and Liu \cite[Proposition~3.1]{dePaivaLiu}.
We first rewrite a conjugate of the input braid as a product of
negative prefix blocks followed by a central positive full twist.
In the first round, repeated applications of the isotopy give
twisting blocks supported on increasing numbers of strands.
We then separate negative full twists from the intermediate
negative factors and apply the same isotopy repeatedly in a
second round. This gives a generalised \(T\)-link presentation
whose only negative twisting exponent is the last one.
For completeness, we give an algebraic proof of the strand-removal
step and verify the support conditions at each application.
The parameter formulas record these operations and allow us to
compute the output directly from the input word.

After decomposing the braid into blocks, we may move initial
blocks to the end by conjugation. Different choices of the first
block can lead to different final strand numbers. We choose the
one that gives the smallest number within this construction,
without claiming minimality among all generalised \(T\)-link
presentations of the same link.

The input braid is given letter by letter as an Artin word of
length \(c\), while the output is a list of parameter pairs
describing the final braid. We compute these pairs directly,
without writing out the final braid word or performing every
Markov move used in the proof. For fixed \(p\),
Theorem~\ref{thm:main} gives \(O(c)\) integer operations and
\(O(c\log(c+1))\) bit operations, even though the braid described
by the output may have quadratically many strands and cubically
many crossings in \(c\).

We also consider oriented link diagrams. We first apply Yamada--Vogel braiding \cite{Yamada,Vogel}
to obtain a braid whose closure is isotopic to the given
oriented link, and then apply our algorithm. For a diagram
with \(n\) crossings and \(\mu\) components, put \(N=n+\mu\).
The braiding step produces \(O(N)\) strands and a word of length
\(O(N^2)\). Theorem~\ref{thm:main} therefore gives a generalised
\(T\)-link presentation with \(O(N^2)\) pairs and \(O(N^5)\)
strands. The complete procedure can be implemented using
\(O(N^4\log(N+1))\) bit operations; see
Theorem~\ref{thm:diagram}. We include the number of components
because a diagram with no crossings can still contain
arbitrarily many disjoint circles.

The conversion also allows us to use classifications and invariant
formulas for generalised \(T\)-links to study arbitrary link diagrams.
To use such a classification or formula, we must include the cost
of testing the classification criterion or evaluating the invariant
formula. We make this relation precise in the next theorem.

For a generalised \(T\)-link presentation
\(\mathcal P=((r_1,s_1),\ldots,(r_k,s_k))\), put
\begin{equation}\label{eq:intro-expanded-size}
 W(\mathcal P)=r_k+\sum_{j=1}^k(r_j-1)|s_j|.
\end{equation}
This is the sum of the number of strands and the number of crossings
in the corresponding braid diagram. A time bound polynomial in \(W\)
is equivalent to polynomial time when the parameters are supplied
in unary; the proof and the distinction from binary encoding are
given in Section~\ref{subsec:transfer-sizes}.

For invariants, we fix in advance how each value is represented
as a finite sequence of symbols from a fixed finite alphabet.
Examples include integers written in binary and polynomials
with integer coefficients represented by lists of coefficients
and exponents. The same output representation is used for
diagram and parameter inputs, and the running time includes
the time needed to write the complete answer.

\begin{theorem}\label{thm:intro-transfer}
Let \(\chi\) be a fixed property of nonempty oriented links
in \(S^3\) that is preserved by ambient isotopy.
The following are equivalent:
\begin{enumerate}
\item There is a deterministic algorithm that, given a
combinatorial diagram \(D\) with \(n\) crossings and \(\mu\)
components, decides whether \(L(D)\) has property \(\chi\)
in time polynomial in \(n+\mu\).

\item There is a deterministic algorithm that, given any
generalised \(T\)-link parameter list \(\mathcal P\), decides
whether \(T(\mathcal P)\) has property \(\chi\) in time
polynomial in \(W(\mathcal P)\).
\end{enumerate}
The same equivalence holds for computing any fixed link
invariant with the output representation specified above.
\end{theorem}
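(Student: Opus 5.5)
The equivalence will be proved by two polynomial-time reductions, one in each direction, composed with the hypothesised algorithm. Throughout, the only facts about \(\chi\) that we use are that it depends only on the isotopy class of the oriented link and that it has a fixed output encoding. Hence a correct algorithm on one representation, run on an isotopic representation, gives the correct answer.

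For (1)\(\Rightarrow\)(2), we start from a parameter list \(\mathcal P=((r_1,s_1),\ldots,(r_k,s_k))\) and write out the braid word \(\delta_{r_1}^{s_1}\cdots\delta_{r_k}^{s_k}\) on \(r_k\) strands letter by letter. It has \(\sum_j(r_j-1)|s_j|\leq W(\mathcal P)\) letters, and reading the binary input and checking the parameter conditions costs time polynomial in \(W\), since every integer involved is at most \(W\). Next we form the standard closed-braid diagram. This diagram has \(n=\sum_j(r_j-1)|s_j|\) crossings and at most \(\mu\leq r_k\) components, so \(n+\mu\leq W(\mathcal P)\). Its combinatorial description, for example a PD-code obtained by tracing strands through the letters, can be produced in time polynomial in \(W\). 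We then run the algorithm of (1) on this diagram. The total running time is polynomial in \(W\). The only care needed is that (1) has to accept this diagram as a valid combinatorial diagram, including crossingless components, which is why \(\mu\) appears in the statement.

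For (2)\(\Rightarrow\)(1), we are given a diagram \(D\) with \(N=n+\mu\), and we apply Theorem~\ref{thm:diagram}. By Yamada--Vogel braiding we obtain a braid on \(p=O(N)\) strands with a word of length \(c=O(N^2)\), and Theorem~\ref{thm:main} then yields \(\GT(\beta)\), all in time polynomial in \(N\). The key step is to bound \(W(\GT(\beta))\) polynomially in \(N\). Theorem~\ref{thm:main} only bounds \(k\leq 3c\) and \(M\leq pc(c+3)/2\); it does not bound the exponents \(s_j\) directly. So we need an additional bound \(|s_j|\leq \mathrm{poly}(p,c)\), which we plan to extract from the explicit parameter formulas behind Theorem~\ref{thm:main}. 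Those formulas come from negative prefix blocks, one positive full twist \(\delta_p^{p}\), and the two rounds of reverse braid reduction, and each exponent should be a sum of \(O(c)\) terms, each of size \(O(M)\). Alternatively, the statement of Theorem~\ref{thm:main} already records that the output braid has cubically many crossings. Granting such a bound, we get \(W=r_k+\sum_j(r_j-1)|s_j|\leq M+3cM\cdot\max_j|s_j|\), which is polynomial in \(N\).

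With \(W\) polynomially bounded, we run the algorithm of (2) on \(\GT(\beta)\). The resulting time bound, polynomial in \(W\), is polynomial in \(N\), and since the closure is isotopic to \(L(D)\), the answer is correct. The case \(p\leq 2\) or \(c=0\), which Theorem~\ref{thm:main} excludes from its size bounds, is handled by first stabilising or adding a trivial crossing pair so that \(p\geq3\) and \(c\geq1\). This changes \(p\) and \(c\) only by constants, and the stabilisation must preserve the closure's isotopy class including split components.

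For invariants the same two reductions apply verbatim. The output representation is identical on both sides, so the time spent writing the answer is already inside the simulated algorithm's time bound. The main obstacle I expect is the crossing bound on \(\sum_j(r_j-1)|s_j|\) for \(\GT(\beta)\), since it is not among the stated inequalities of Theorem~\ref{thm:main} and must be read off from the construction's explicit exponent formulas.
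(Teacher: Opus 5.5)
Your proposal is correct and follows essentially the paper's proof. The paper builds the same two translations in Proposition~\ref{prop:translations}: \(E\) expands and closes the braid, giving \(N(E(\mathcal P))\leq W(\mathcal P)\), and \(F\) applies Yamada--Vogel braiding followed by Algorithm~\ref{alg:complete}; it then composes them with the hypothesised algorithm, for properties and invariants alike.

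The exponent bound you grant is supplied by \eqref{eq:integer-bounds}, \(|s_j|\leq p(c+1)\), and more sharply by Proposition~\ref{prop:writhe}, \(C(\mathcal P)=O(p^2c^3)\), which yields \(W(F(D))=O(N^8)\). Your cruder estimate would give \(O(N^{10})\), which is equally sufficient. The paper treats \(p\leq2\) or \(c=0\) by the explicit returns of Section~\ref{subsec:padding} rather than by stabilising.
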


To prove this theorem, we convert between diagrams and parameter
lists. From a diagram with \(n\) crossings and \(\mu\) components,
our algorithm produces a generalised \(T\)-link parameter list
\(\mathcal P\) satisfying \(W(\mathcal P)=O((n+\mu)^8)\). Conversely, writing out the braid described
by a parameter list and closing it gives a diagram whose total
number of crossings and components is \(O(W)\). These conversions
take polynomial time in \(n+\mu\) and \(W\), respectively, and
preserve the oriented link. An algorithm for deciding a link
property or computing an invariant from either description can
therefore be used with the other description after conversion.
The argument does not require a unique parameter list for each
link: every list representing the same oriented link must give
the same answer.

\begin{corollary}\label{cor:intro-binary-transfer}
Suppose that a fixed link property can be decided, or a fixed
link invariant with the output representation specified above
can be computed, by a single deterministic algorithm in time
polynomial in the binary length of a generalised \(T\)-link
parameter list. Then the corresponding problem can be solved
in polynomial time from combinatorial link diagrams.
It suffices that the parameter algorithm be correct and satisfy
the stated running-time bound on the lists produced by our
conversion.
\end{corollary}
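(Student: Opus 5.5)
The argument is a composition: the diagram-to-parameter conversion (Theorem~\ref{thm:diagram}, built on Theorem~\ref{thm:main}) followed by the assumed parameter algorithm. Let \(A\) be the deterministic algorithm that decides \(\chi\) (or computes the invariant) from a generalised \(T\)-link list \(\mathcal P\) in time at most \(q(\ell(\mathcal P))\). Here \(\ell(\mathcal P)\) is the binary length of \(\mathcal P\) and \(q\) is a fixed polynomial. Given a combinatorial diagram \(D\) with \(n\) crossings and \(\mu\) components, put \(N=n+\mu\). The procedure first runs Yamada--Vogel braiding and then the algorithm of Theorem~\ref{thm:main}, which produces \(\mathcal P=\GT(\beta)\) with \(T(\mathcal P)\) isotopic to \(L(D)\) as oriented links. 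It then runs \(A\) on \(\mathcal P\) and returns the answer unchanged. Correctness holds because \(\chi\) and the invariant are isotopy invariants, so the answer for \(T(\mathcal P)\) equals the answer for \(L(D)\). This uses only that \(A\) is correct on the particular lists the conversion produces, which matches the final sentence of the statement.

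Next I bound the running time. By Theorem~\ref{thm:diagram}, the conversion uses \(O(N^4\log(N+1))\) bit operations. This is polynomial in \(N\) once I convert bit operations to Turing-machine (or RAM) steps under the model of Section~\ref{subsec:model}, with at most polynomial overhead. I also have to handle the degenerate cases \(p\le 2\) or \(c=0\) that Theorem~\ref{thm:main} excludes from its size bounds. For these I would either pad the braid with a few trivial strands and a canceling pair \(\sigma_1\sigma_1^{-1}\), which changes neither the closure nor the component count, or invoke whatever convention the algorithm already uses for them. Either way the output has \(k=O(N^2)\) pairs and strand numbers \(M=O(N^5)\).

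The remaining step is to bound \(\ell(\mathcal P)\). Each \(r_j\le M\), so it has \(O(\log N)\) bits. For the exponents, I would use the fact that the total output was written in \(O(N^4\log(N+1))\) bit operations, which bounds the total binary length of all \(|s_j|\) by the same quantity. If needed, I can instead read the bound on \(|s_j|\) off the parameter formulas: since the output braid has polynomially many crossings, \(W(\mathcal P)=O(N^8)\), and therefore \(|s_j|\le W\). Hence \(\ell(\mathcal P)=O(k\log W)=O(N^2\log(N+1))\), and \(A\) runs in time \(q(O(N^2\log N))\), which is polynomial in \(N\).

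Finally, the output representation is the same fixed encoding for both input types, so writing the answer is already counted in \(A\)'s time and needs no re-encoding. The only point that needs care is making sure the time bound of \(A\) is measured in the binary length of the \emph{actual} list it receives, and this is the step where the bound on \(\ell(\mathcal P)\) is essential. Once that bound is in place, the corollary follows because a polynomial composed with a polynomial is a polynomial.
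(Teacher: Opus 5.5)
Your argument is correct and is the paper's proof. You compose the conversion $F$ (braiding followed by Algorithm~\ref{alg:complete}) with the parameter procedure, require correctness only on $F(\mathscr D)$, and bound the composite time using $\ell(F(D))=O(N^2\log(N+1))$; you rederive that bound from $k=O(N^2)$ and $r_j,|s_j|\le W(F(D))=O(N^8)$ rather than citing it from Theorem~\ref{thm:diagram}.

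One small slip concerns the degenerate cases. Your first option fails, because adding trivial strands adds split unknotted components, and a cancelling pair $\sigma_1\sigma_1^{-1}$ would be removed by the preprocessing anyway. Your second option is the right one: the boundary returns of Section~\ref{subsec:padding} are already covered by the bounds in Theorem~\ref{thm:diagram}.
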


Our intended application is to use exact formulas expressing
link invariants directly in terms of generalised \(T\)-link
parameters. Suppose that an invariant \(I\) admits an explicit
formula
\[
 I(T(\mathcal P))
 =F(r_1,s_1,\ldots,r_k,s_k).
\]
Given an arbitrary oriented link diagram \(D\), our conversion
produces a parameter list \(\mathcal P\) representing the same
oriented link. Substituting these parameters into the formula
then gives \(I(L(D))\). For example, a formula for the Jones
polynomial in these parameters would give the Jones polynomial
of the original link by this substitution. The same principle
applies to a homological invariant with a fixed finite output
representation.

Likewise, a complete classification expressed by necessary and
sufficient conditions on the parameters gives a criterion for
the corresponding property. If testing these conditions or
evaluating the invariant formula takes time polynomial in the
binary length of the parameter list, the corollary gives a
polynomial-time procedure on diagrams. For this application,
it suffices to establish these guarantees on the lists produced
by our conversion.

In the final section, we also consider additional inputs and
real-valued invariants. For a real-valued invariant \(V\), an
integer precision request \(b\geq1\) asks for a rational
approximation \(\widetilde V\) satisfying
\[
 |\widetilde V-V(L)|\leq 2^{-b}.
\]
Thus, \(b\) specifies the permitted absolute error. For example,
\(b=10\) permits an error of at most \(1/1024\), and increasing
\(b\) by one halves the permitted error. We include the requested
precision in the size measures, using \(n+\mu+b\) for diagrams
and \(W(\mathcal P)+b\) for parameter lists. With these measures,
we prove the analogue of Theorem~\ref{thm:intro-transfer} for
polynomial-time approximation. Hyperbolic volume is one example:
we assume that the input represents a hyperbolic link \(L\) and
seek an approximation to \(\operatorname{Vol}(S^3\setminus L)\)
with the requested error bound.

The paper is organised as follows.
Section~\ref{sec:background} fixes our braid conventions, defines
generalised \(T\)-link presentations, and specifies the computational
model. Section~\ref{sec:isotopy} establishes the braid identities
and reduction isotopies used in the construction.
Section~\ref{sec:cyclic} gives the decomposition into negative
prefix blocks. Section~\ref{sec:parameters} derives the explicit
output parameters and determines which choice of the first block
minimises the number of strands within this construction.
Section~\ref{sec:shortcuts} combines the construction with elementary
simplification and direct recognition to give the complete algorithm.
Section~\ref{sec:complexity} proves the running-time, storage, and
output-size bounds, and Section~\ref{sec:examples} illustrates
the algorithm with explicit examples.
Section~\ref{sec:diagrams} extends the conversion to combinatorial
link diagrams. Finally, Section~\ref{sec:transfer} proves the transfer results
for link properties, finitely encoded invariants, and real-valued
invariants with prescribed accuracy.

\section{Braid conventions and input size}\label{sec:background}

In this section we recall our braid notation and generalised link
presentations, and specify the input and computational model. We
include the conventions needed for split links and short braid words.

\subsection{Twisting blocks and Markov moves}
\label{subsec:braids}

We begin by recalling the braid-group conventions and the elementary
operations used throughout the paper. Unless otherwise stated, the
standard inclusion \(\Br_p\hookrightarrow\Br_{p+1}\) is obtained by
adding one trivial strand on the right.

For \(p\geq2\), the Artin braid group \(\Br_p\) is generated by
\(\sigma_1,\ldots,\sigma_{p-1}\), subject to the relations
\[
 \sigma_i\sigma_j=\sigma_j\sigma_i
 \quad (|i-j|\geq2),
 \qquad
 \sigma_i\sigma_{i+1}\sigma_i
 =
 \sigma_{i+1}\sigma_i\sigma_{i+1},
\]
where the first relation holds for \(1\leq i,j\leq p-1\)
and the second for \(1\leq i\leq p-2\).
We also set \(\Br_1=\{1\}\).
We write \(\cl{\alpha}\) for the usual oriented closure of a braid
\(\alpha\), and use \(\iso\) to denote ambient isotopy of oriented
links in \(S^3\).

We recall four types of operations used throughout the paper.
When we need to specify the strand number explicitly, we write
\(\operatorname{cl}_p(\alpha)\) for the oriented closure of
\(\alpha\in\Br_p\) on exactly \(p\) strands.

\begin{enumerate}
\item \textbf{Braid equalities.}
These are equalities between words representing the same element
of a fixed braid group. They follow from the Artin relations,
cancellation of adjacent inverse letters, and algebraic consequences
of these identities. Replacing a word or a factor by an equal braid
preserves both the braid element and its declared strand number.
Consequently, it also preserves the oriented closure.

\item \textbf{Conjugation and cyclic permutations.}
For \(\alpha,\gamma\in\Br_p\), conjugation preserves the oriented
closure:
\[
 \operatorname{cl}_p(\alpha)
 \iso
 \operatorname{cl}_p(\gamma^{-1}\alpha\gamma).
\]
In particular, if \(X,Y\in\Br_p\), then
\[
 X^{-1}(XY)X=YX,
\]
and hence
\[
 \operatorname{cl}_p(XY)
 \iso
 \operatorname{cl}_p(YX).
\]
Thus, a factor at the beginning of a product may be moved to the
end, or conversely, without changing the oriented link represented
by its closure. Here \(X\) and \(Y\) may themselves be products of
several braid factors. We refer to this operation as a
\emph{cyclic permutation of factors}, or a \emph{cyclic conjugation}.
It does not change the number of strands.

\item \textbf{Markov stabilisation.}
Let \(\alpha\in\Br_p\), and regard it as an element of
\(\Br_{p+1}\) by adding one trivial strand on the right.
For \(\epsilon\in\{1,-1\}\), the move
\[
 (p,\alpha)
 \longmapsto
 (p+1,\alpha\sigma_p^\epsilon)
\]
is called a \emph{Markov stabilisation}. It is positive when
\(\epsilon=1\) and negative when \(\epsilon=-1\).
Geometrically, the move adds one strand and one crossing involving
the last two strands. It preserves the oriented closure:
\[
 \operatorname{cl}_p(\alpha)
 \iso
 \operatorname{cl}_{p+1}(\alpha\sigma_p^\epsilon).
\]

\item \textbf{Markov destabilisation.}
This is the inverse move:
\[
 (p+1,\alpha\sigma_p^\epsilon)
 \longmapsto
 (p,\alpha),
 \qquad \alpha\in\Br_p.
\]
It is positive or negative according to whether
\(\epsilon=1\) or \(\epsilon=-1\).
The condition \(\alpha\in\Br_p\) is essential: in the displayed
representative, all factors preceding \(\sigma_p^\epsilon\)
are supported on the first \(p\) strands. The last strand therefore
participates only in the displayed crossing, and removing that
strand together with the crossing gives \(\alpha\) on \(p\) strands.
Thus,
\[
 \operatorname{cl}_{p+1}(\alpha\sigma_p^\epsilon)
 \iso
 \operatorname{cl}_p(\alpha).
\]
Whenever we apply a destabilisation below, we verify this support
condition explicitly.
\end{enumerate}

The full Markov theorem states that two braids have ambient
isotopic oriented closures if and only if they are related by
a finite sequence of braid equalities, conjugations, and signed
Markov stabilisations and destabilisations.
In the arguments below, we use only the fact that each of these
operations preserves the oriented link type.
We refer to \cite{KasselTuraev} for the full statement of
Markov's theorem.

We now introduce two braid words used to describe twisting blocks.
For every \(r\geq2\), define
\begin{equation}\label{eq:roots}
 \delta_r=\sigma_1\cdots\sigma_{r-1},
 \qquad
 E_r=\sigma_1^{-1}\cdots\sigma_{r-1}^{-1}.
\end{equation}
The order of the generators in the definition of \(E_r\) is
important. Indeed,
\[
 \delta_r^{-1}
 =
 \sigma_{r-1}^{-1}\cdots\sigma_1^{-1}.
\]
For \(r=2\), we have \(E_2=\delta_2^{-1}\). For \(r\geq3\), however,
\(E_r\) and \(\delta_r^{-1}\) are distinct braid elements, although
they are conjugate. Thus, \(E_r\) is obtained by taking the negative
generators in increasing order, rather than by taking the inverse
of the word defining \(\delta_r\).

For every \(r\geq2\), let \(\Delta_r\in\Br_r\) denote the standard
Garside half twist, given by
\[
 \Delta_r
 =
 \delta_r\delta_{r-1}\cdots\delta_2
 =
 (\sigma_1\cdots\sigma_{r-1})
 (\sigma_1\cdots\sigma_{r-2})\cdots\sigma_1,
\]
where each \(\delta_j\) is regarded as an element of \(\Br_r\)
by adding trivial strands on the right. Reversing the order of
the factors and taking their inverses gives
\[
 \Delta_r^{-1}
 =
 \delta_2^{-1}\delta_3^{-1}\cdots\delta_r^{-1}
 =
 \sigma_1^{-1}
 (\sigma_2^{-1}\sigma_1^{-1})\cdots
 (\sigma_{r-1}^{-1}\cdots\sigma_1^{-1}).
\]
For \(r=2\), these formulas reduce to
\(\Delta_2=\sigma_1\) and \(\Delta_2^{-1}=\sigma_1^{-1}\).
The squares \(\Delta_r^2\) and \(\Delta_r^{-2}\) are, respectively,
the positive and negative full twists on \(r\) strands. They satisfy
\[
 \delta_r^r=\Delta_r^2,
 \qquad
 \delta_r^{-r}=\Delta_r^{-2}.
\]

To obtain the corresponding identity for \(E_r\), recall that
conjugation by \(\Delta_r\) reverses the indices of the Artin
generators:
\[
 \Delta_r\sigma_i\Delta_r^{-1}
 =
 \sigma_{r-i},
 \qquad 1\leq i\leq r-1.
\]
The index-reversal identity also holds for inverse generators,
since
\[
 \Delta_r\sigma_i^{-1}\Delta_r^{-1}
 =
 (\Delta_r\sigma_i\Delta_r^{-1})^{-1}
 =
 \sigma_{r-i}^{-1},
 \qquad 1\leq i\leq r-1.
\]
Consequently, inserting \(\Delta_r^{-1}\Delta_r=1\) between
successive generators, we obtain
\[
 \begin{aligned}
 \Delta_r\delta_r^{-1}\Delta_r^{-1}
 &=
 \Delta_r
 (\sigma_{r-1}^{-1}\cdots\sigma_1^{-1})
 \Delta_r^{-1}\\
 &=
 (\Delta_r\sigma_{r-1}^{-1}\Delta_r^{-1})
 \cdots
 (\Delta_r\sigma_1^{-1}\Delta_r^{-1})\\
 &=
 \sigma_1^{-1}\cdots\sigma_{r-1}^{-1}\\
 &=
 E_r.
 \end{aligned}
\]
Taking the \(r\)-th power and using
\(\delta_r^r=\Delta_r^2\), we obtain
\[
 \begin{aligned}
 E_r^r
 &=
 \Delta_r\delta_r^{-r}\Delta_r^{-1}\\
 &=
 \Delta_r\Delta_r^{-2}\Delta_r^{-1}\\
 &=
 \Delta_r^{-2}.
 \end{aligned}
\]
Thus, although \(E_r\) and \(\delta_r^{-1}\) are generally different
braid elements, their \(r\)-th powers agree:
\begin{equation}\label{eq:full-twist}
 \delta_r^r=\Delta_r^2,
 \qquad
 E_r^r=\Delta_r^{-2}=\delta_r^{-r}.
\end{equation}

Both \(\Delta_r^2\) and \(\Delta_r^{-2}\) are central elements
of \(\Br_r\); see \cite{KasselTuraev}. In particular, these full
twists commute with every braid on the same \(r\) strands.
We emphasize, however, that centrality is being asserted in
\(\Br_r\). After embedding \(\Br_r\) into a larger braid group,
the same elements need not remain central. Accordingly, whenever
we commute a full-twist block with another braid factor, we will
check the support conditions that justify the commutation in
the appropriate braid subgroup.

\subsection{Generalised \texorpdfstring{\(T\)}{T}-link presentations}
\label{subsec:gt}

We now recall the offset-zero presentations in
\cite[Definition 3.8]{dePaivaHuiRodriguez}. We keep track of the
strand number in this definition so that unused input strands
remain part of the closure.

\begin{definition}\label{def:gt}
A generalised \(T\)-link presentation with offset zero is a
nonempty list
\[
 \mathcal P=((r_1,s_1),\ldots,(r_k,s_k))
\]
such that \(2\leq r_1<\cdots<r_k\), \(s_j>0\) for \(j<k\),
and \(s_k\in\mathbb Z\setminus\{-1,0,1\}\).
It represents the link
\[
 T(\mathcal P)
 =
 \cl{\delta_{r_1}^{s_1}\cdots\delta_{r_k}^{s_k}}
\]
on exactly \(M=r_k\) strands. We call \(k\) its number of pairs
and \(M\) its final strand parameter.
\end{definition}

The general definition permits an offset \(d\geq0\), replaces each
\(\sigma_i\) by \(\sigma_{d+i}\), and takes the closure on
\(d+r_k\) strands. This adds \(d\) split unknots.
Our output always has \(d=0\), including when the input is split.
The absence of an offset does not imply that the represented
link is nonsplit.

For a presentation \(\mathcal P\), its writhe and its expanded
Artin length are, respectively,
\begin{equation}\label{eq:metrics-definition}
 w(\mathcal P)=\sum_{j=1}^k(r_j-1)s_j,
 \qquad
 C(\mathcal P)=\sum_{j=1}^k(r_j-1)|s_j|.
\end{equation}
Both quantities refer to the specified braid representative.
Thus, \(C(\mathcal P)\) counts its crossings, without asserting
that this number is minimal for the link.

\subsection{Expanded input and the computational model}
\label{subsec:model}

We next specify how the input and output are written when counting
operations. In particular, we distinguish an expanded word from a
word whose powers are encoded in binary.
The input consists of a positive integer \(p\), in binary, and
a list of \(c\) signed integers \(\epsilon_j i_j\), where
\(\epsilon_j\in\{1,-1\}\) and \(1\leq i_j<p\).
Each entry denotes one Artin letter \(\sigma_{i_j}^{\epsilon_j}\).
A power \(\sigma_i^a\) therefore contributes \(|a|\) entries.
The declaration of \(p\) includes every unused strand.

The output is the binary list of pairs, with the usual delimiters.
Its Artin expansion is not part of the parameter-computation
problem. We count additions, subtractions, comparisons, Boolean
operations, array accesses, products, and Euclidean divisions on
the integers encountered by the algorithm.

For integer-operation bounds, these operations have unit cost.
More precisely, the results can also be read as word-RAM bounds
with word length a sufficiently large constant multiple of \(L\),
where \(L\) is defined in \eqref{eq:L}.
For bit-cost bounds we retain random access and charge \(O(L)\)
for an operation on \(O(L)\)-bit integers involving addition,
comparison, a Boolean operation, or a memory read or write.
Schoolbook multiplication and division cost \(O(L^2)\).
We call this the \emph{bit-cost random-access model}.
We do not identify this bound with an unanalysed simulation on
a sequential single-tape machine.

All constants in our asymptotic statements are uniform in \(p\)
and \(c\). In particular, the implementation never allocates
an array of length \(p\) merely to record unused strands.
For a succinct input with powers encoded in binary, the same
algorithm may take time polynomial in the expanded length
rather than in the succinct input size.

\section{The negative prefix isotopy}\label{sec:isotopy}

In this section we convert negative prefix blocks into twisting
blocks on increasing numbers of strands. We first express the
signed generators using \(E_r\), then prove the one-strand
reduction and iterate it to obtain the prefix formula.
The input widths may occur in any order.

\subsection{Identities for a signed generator}
\label{subsec:letters}

We first record identities for the signed generators in a fixed
braid group. By the Artin relations, we have
\begin{equation}\label{eq:shift}
 \delta_p\sigma_i^{\pm1}\delta_p^{-1}
 =\sigma_{i+1}^{\pm1},
 \qquad
 E_p\sigma_i^{\pm1}E_p^{-1}
 =\sigma_{i+1}^{\pm1}
 \quad (1\leq i\leq p-2).
\end{equation}
For the first equality, commute the generators at distance at
least two, apply the three-term braid relation to
\(\sigma_i,\sigma_{i+1}\), and cancel.
The second equality follows by applying the mirror automorphism
\(\sigma_j\mapsto\sigma_j^{-1}\) to the first one.

\begin{lemma}\label{lem:letters}
For \(p\geq3\) and \(1\leq i<p\),
\begin{equation}\label{eq:letter-identities}
 \sigma_i=E_p^iE_{p-1}E_p^{-i-1},
 \qquad
 \sigma_i^{-1}=E_p^{i-1}E_2E_p^{-(i-1)}.
\end{equation}
\end{lemma}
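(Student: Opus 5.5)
The plan is to reduce both identities to the shift relation \eqref{eq:shift} for \(E_p\), together with centrality of the full twist \(E_p^p=\Delta_p^{-2}\) in \(\Br_p\) from \eqref{eq:full-twist}. Throughout, \(E_{p-1}\) and \(E_2\) are regarded as elements of \(\Br_p\) by adding trivial strands on the right, so every computation takes place in the single group \(\Br_p\).

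\emph{The second identity.} We have \(E_2=\sigma_1^{-1}\). By \eqref{eq:shift}, \(E_p\sigma_k^{-1}E_p^{-1}=\sigma_{k+1}^{-1}\) for \(1\leq k\leq p-2\). Apply this \(i-1\) times, starting from \(\sigma_1^{-1}\). The successive indices are \(1,2,\ldots,i\), so each application is made at an index \(k\leq i-1\leq p-2\), as required. This gives \(E_p^{i-1}E_2E_p^{-(i-1)}=\sigma_i^{-1}\). For \(i=1\) there is nothing to do.

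\emph{The first identity.} By definition \(E_p=E_{p-1}\sigma_{p-1}^{-1}\), and hence \(E_{p-1}=E_p\sigma_{p-1}\). Substituting this gives
\[
 E_p^iE_{p-1}E_p^{-i-1}=E_p^{i+1}\sigma_{p-1}E_p^{-(i+1)}.
\]
Forward shifting cannot be used directly, since the index \(p-1\) is already maximal. Instead, we use that \(E_p^{p}\) is central in \(\Br_p\). Multiplying on the left by \(E_p^{-p}\) and on the right by \(E_p^{p}\) does not change the element, so
\[
 E_p^{i+1}\sigma_{p-1}E_p^{-(i+1)}=E_p^{-(p-i-1)}\sigma_{p-1}E_p^{\,p-i-1}.
\]

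Rearranging \eqref{eq:shift} gives the backward shift \(E_p^{-1}\sigma_{k+1}E_p=\sigma_k\) for \(1\leq k\leq p-2\). Apply it \(p-i-1\) times, starting from \(\sigma_{p-1}\). The indices pass through \(p-1,p-2,\ldots,i\), and every step starts at an index in \([i+1,p-1]\). Hence each step uses \(k\in[i,p-2]\subseteq[1,p-2]\), so the shift is valid. The result is \(\sigma_i\). When \(i=p-1\) no shift is needed.

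\emph{Main obstacle.} The only delicate point is the wrap-around at the top index in the first identity: the shift relation fails at \(\sigma_{p-1}\). This is handled by trading \(i+1\) forward conjugations for \(p-i-1\) backward ones via the central full twist. Centrality must be used in \(\Br_p\) itself, which holds because all factors lie in \(\Br_p\), so this is precisely the setting in which \eqref{eq:full-twist} and centrality were stated. A sanity check at \(p=3\), \(i=1\) confirms \(E_3\sigma_1^{-1}E_3^{-2}=\sigma_1\).
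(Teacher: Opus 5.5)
Your proof is correct. The second identity is handled exactly as in the paper, but for the first identity you get past the top index by a different route.

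The paper never needs to shift \(\sigma_{p-1}\). It conjugates the generators \(\sigma_1^{-1},\ldots,\sigma_{p-2}^{-1}\) of \(E_{p-1}\) by \(E_p\) using \eqref{eq:shift}, which gives \(E_pE_{p-1}E_p^{-1}=\sigma_2^{-1}\cdots\sigma_{p-1}^{-1}=\sigma_1E_p\). Hence \(\sigma_1=E_pE_{p-1}E_p^{-2}\), and the paper then shifts forward by \(E_p^{i-1}\).

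You instead substitute \(E_{p-1}=E_p\sigma_{p-1}\), which reduces the claim to \(\sigma_i=E_p^{i+1}\sigma_{p-1}E_p^{-(i+1)}\). You then use the centrality of \(E_p^p=\Delta_p^{-2}\) in \(\Br_p\) to replace \(i+1\) forward conjugations by \(p-i-1\) backward ones. Your index checks for these backward shifts are correct.

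The paper's argument is more elementary, since it rests only on the Artin relations through \eqref{eq:shift}. Yours additionally uses \eqref{eq:full-twist} and the centrality of the full twist, which the paper quotes from the literature. This is legitimately available at that point in the paper, with no circularity. In return, your version explains the shape of the formula. Conjugation by \(E_p\) has period \(p\), so carrying \(\sigma_{p-1}\) forward \(i+1\) steps is the same as carrying it back \(p-i-1\) steps to \(\sigma_i\).
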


\begin{proof}
Repeated application of \eqref{eq:shift} gives
\[
 E_p^{i-1}\sigma_1^{\pm1}E_p^{-(i-1)}
 =\sigma_i^{\pm1},
 \qquad 1\leq i<p.
\]
Every application is valid because the index being shifted is
at most \(p-2\). Since \(E_2=\sigma_1^{-1}\), the negative-sign
case proves the second equality.

To prove the first equality, conjugate each generator of
\(E_{p-1}\) by \(E_p\). Using \eqref{eq:shift}, we obtain
\[
 \begin{aligned}
 E_pE_{p-1}E_p^{-1}
 &=(E_p\sigma_1^{-1}E_p^{-1})\cdots
   (E_p\sigma_{p-2}^{-1}E_p^{-1})\\
 &=\sigma_2^{-1}\cdots\sigma_{p-1}^{-1}\\
 &=\sigma_1
   (\sigma_1^{-1}\sigma_2^{-1}\cdots\sigma_{p-1}^{-1})\\
 &=\sigma_1E_p.
 \end{aligned}
\]
Multiplying on the right by \(E_p^{-1}\) gives
\[
 \sigma_1=E_pE_{p-1}E_p^{-2}.
\]
Finally, conjugating this identity by \(E_p^{i-1}\) yields
\[
 \begin{aligned}
 \sigma_i
 &=E_p^{i-1}\sigma_1E_p^{-(i-1)}\\
 &=E_p^{i-1}
   (E_pE_{p-1}E_p^{-2})E_p^{-(i-1)}\\
 &=E_p^iE_{p-1}E_p^{-i-1},
 \end{aligned}
\]
as required.
\end{proof}

\subsection{Removing one strand}\label{subsec:one-strand}

We now write a negative twisting block so that its last-strand
generator appears exactly once. This allows us to identify the
Markov destabilisation in the reduction.

\begin{lemma}\label{lem:power}
If \(m\geq3\) and \(1\leq q<m\), then
\begin{equation}\label{eq:power}
 \delta_m^q
 =
 \delta_{m-1}^q
 (\sigma_{m-1}\sigma_{m-2}\cdots\sigma_{m-q}).
\end{equation}
\end{lemma}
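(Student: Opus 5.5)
Induction on \(q\), with \(m\) fixed. For \(q=1\) the identity reads \(\delta_m=\delta_{m-1}\sigma_{m-1}\), which is just the definition \(\delta_m=\sigma_1\cdots\sigma_{m-2}\sigma_{m-1}\). Write \(P_q=\sigma_{m-1}\sigma_{m-2}\cdots\sigma_{m-q}\), so the claim is \(\delta_m^q=\delta_{m-1}^qP_q\).

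For the induction step, assume \(\delta_m^q=\delta_{m-1}^qP_q\) with \(q+1<m\). Then
\[
 \delta_m^{q+1}=\delta_{m-1}^qP_q\,\delta_{m-1}\sigma_{m-1},
\]
and it suffices to prove the commutation identity
\[
 P_q\,\delta_{m-1}\,\sigma_{m-1}=\delta_{m-1}\,P_{q+1}
 \qquad (1\leq q\leq m-2).
\]
The plan is to push \(\delta_{m-1}\) leftward through \(P_q\) letter by letter. From \eqref{eq:shift} applied in \(\Br_{m-1}\), we get \(\sigma_{j}\delta_{m-1}=\delta_{m-1}\sigma_{j-1}\) for \(2\leq j\leq m-2\). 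The factor \(\sigma_{m-1}\) at the left of \(P_q\) is not of this form, so it needs separate handling.

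Concretely, I will write \(P_q=\sigma_{m-1}Q\) with \(Q=\sigma_{m-2}\cdots\sigma_{m-q}\), a word in indices between \(m-q\geq2\) and \(m-2\). Shifting down index by index gives
\[
 Q\delta_{m-1}=\delta_{m-1}Q',\qquad Q'=\sigma_{m-3}\cdots\sigma_{m-q-1}.
\]
It remains to show
\[
 \sigma_{m-1}\delta_{m-1}Q'\sigma_{m-1}=\delta_{m-1}\sigma_{m-1}\sigma_{m-2}Q'.
\]
Since \(Q'\) involves only indices \(\leq m-3\), it commutes with \(\sigma_{m-1}\), so the left side becomes \(\sigma_{m-1}\delta_{m-1}\sigma_{m-1}Q'\). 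Write \(\delta_{m-1}=\delta_{m-2}\sigma_{m-2}\), where \(\delta_{m-2}\) commutes with \(\sigma_{m-1}\). Then
\[
 \sigma_{m-1}\delta_{m-2}\sigma_{m-2}\sigma_{m-1}
 =\delta_{m-2}\sigma_{m-1}\sigma_{m-2}\sigma_{m-1}
 =\delta_{m-2}\sigma_{m-2}\sigma_{m-1}\sigma_{m-2}
 =\delta_{m-1}\sigma_{m-1}\sigma_{m-2},
\]
using the braid relation in the middle step. This completes the induction step.

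The main obstacle is the boundary bookkeeping. When \(q=1\), \(Q\) and \(Q'\) are empty and only the last computation is needed. When \(m=3\), \(\delta_{m-2}=\delta_1\) must be read as the empty word. I also need to check that every shift stays in the valid range \(1\leq i\leq m-3\) for \eqref{eq:shift} in \(\Br_{m-1}\). The condition \(q+1\leq m-1\) guarantees \(m-q-1\geq1\), so every index in \(Q'\) is at least \(1\) and every shift is valid.
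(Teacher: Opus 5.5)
Your proof is correct, and it differs from the paper's only in how one auxiliary identity is proved. The outer structure is the same. You induct on \(q\) with \(m\) fixed and reduce the step to \(P_q\delta_{m-1}\sigma_{m-1}=\delta_{m-1}P_{q+1}\). This is the paper's auxiliary identity \eqref{eq:shuffle} with \(n=m-1\); there \(A_{n,q}=\sigma_n\sigma_{n-1}\cdots\sigma_{n-q+1}\), so \(A_{m-1,q}=P_q\).

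The two proofs of that identity go as follows.
\begin{itemize}
\item The paper uses a second induction on \(q\), quantified over all \(n>q\). It peels the top letter \(\sigma_n\) off \(A_{n,q}\) and applies the hypothesis at \((n-1,q-1)\) inside the smaller group \(\Br_n\). It then finishes with commutations and one braid relation.
\item You prove it in a single pass. The conjugation identity \eqref{eq:shift} in \(\Br_{m-1}\) slides \(\delta_{m-1}\) leftward through \(Q=\sigma_{m-2}\cdots\sigma_{m-q}\) as a block, lowering every index by one. Only the top letter \(\sigma_{m-1}\) then needs the braid relation \(\sigma_{m-1}\sigma_{m-2}\sigma_{m-1}=\sigma_{m-2}\sigma_{m-1}\sigma_{m-2}\).
\end{itemize}

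Your route is shorter and avoids the nested induction over a varying \(n\). It also makes the mechanism visible: conjugation by \(\delta_{m-1}\) acts as an index shift on \(\sigma_1,\ldots,\sigma_{m-3}\). The paper's route works directly from the Artin relations and in effect re-derives that shift piece by piece. Since \eqref{eq:shift} is established earlier in the same section, your use of it is legitimate.

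Your range checks are exactly what is needed: \(m-q\geq2\) for each shift, empty \(Q\) and \(Q'\) when \(q=1\), and \(\delta_1\) read as empty when \(m=3\).
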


\begin{proof}
For \(n\geq2\) and \(1\leq q\leq n\), put
\[
 A_{n,q}
 =
 \sigma_n\sigma_{n-1}\cdots\sigma_{n-q+1}
 \in\Br_{n+1}.
\]
We first prove the auxiliary identity
\begin{equation}\label{eq:shuffle}
 A_{n,q}\delta_n\sigma_n
 =
 \delta_n A_{n,q+1},
 \qquad 1\leq q<n.
\end{equation}
We use induction on \(q\), proving the identity for every \(n>q\)
at each step.

For \(q=1\), the generator \(\sigma_n\) commutes with
\(\sigma_1,\ldots,\sigma_{n-2}\). Hence the Artin braid relation
gives
\[
 \begin{aligned}
 A_{n,1}\delta_n\sigma_n
 &=
 \sigma_n(\sigma_1\cdots\sigma_{n-2})
 \sigma_{n-1}\sigma_n\\
 &=
 (\sigma_1\cdots\sigma_{n-2})
 (\sigma_n\sigma_{n-1}\sigma_n)\\
 &=
 (\sigma_1\cdots\sigma_{n-2})
 (\sigma_{n-1}\sigma_n\sigma_{n-1})\\
 &=
 \delta_n A_{n,2}.
 \end{aligned}
\]
When \(n=2\), the product
\(\sigma_1\cdots\sigma_{n-2}\) is empty and is interpreted as
the identity, so this calculation also covers that case.

Now suppose that \(2\leq q<n\) and that \eqref{eq:shuffle}
has been proved with \(q-1\) in place of \(q\), for every
admissible value of \(n\).
Since \(1\leq q-1<n-1\), the induction hypothesis applies
with indices \(n-1\) and \(q-1\), yielding
\[
 A_{n-1,q-1}\delta_{n-1}\sigma_{n-1}
 =
 \delta_{n-1}A_{n-1,q}.
\]
Using
\[
 A_{n,q}=\sigma_n A_{n-1,q-1},
 \qquad
 \delta_n=\delta_{n-1}\sigma_{n-1},
\]
we obtain
\[
 \begin{aligned}
 A_{n,q}\delta_n\sigma_n
 &=
 \sigma_n
 \bigl(A_{n-1,q-1}\delta_{n-1}\sigma_{n-1}\bigr)
 \sigma_n\\
 &=
 \sigma_n\delta_{n-1}
 (\sigma_{n-1}\cdots\sigma_{n-q})\sigma_n\\
 &=
 \delta_{n-1}
 (\sigma_n\sigma_{n-1}\sigma_n)
 (\sigma_{n-2}\cdots\sigma_{n-q})\\
 &=
 \delta_{n-1}
 (\sigma_{n-1}\sigma_n\sigma_{n-1})
 (\sigma_{n-2}\cdots\sigma_{n-q})\\
 &=
 \delta_n A_{n,q+1}.
 \end{aligned}
\]
Here we moved \(\sigma_n\) past the generators of
\(\delta_{n-1}\) and past
\(\sigma_{n-2},\ldots,\sigma_{n-q}\), all of whose indices
differ from \(n\) by at least two. We then applied the braid
relation to \(\sigma_n\sigma_{n-1}\sigma_n\).
This proves \eqref{eq:shuffle}.

We now prove \eqref{eq:power} by induction on \(q\).
For \(q=1\), the identity is simply
\[
 \delta_m=\delta_{m-1}\sigma_{m-1}.
\]
Suppose that \(1\leq q<m-1\) and that \eqref{eq:power}
holds for \(q\). In terms of the auxiliary notation,
the induction hypothesis reads
\[
 \delta_m^q=\delta_{m-1}^q A_{m-1,q}.
\]
Since \(q<m-1\), we may apply \eqref{eq:shuffle} with
\(n=m-1\). Therefore,
\[
 \begin{aligned}
 \delta_m^{q+1}
 &=
 \delta_m^q\delta_m\\
 &=
 \delta_{m-1}^q A_{m-1,q}
 \delta_{m-1}\sigma_{m-1}\\
 &=
 \delta_{m-1}^{q+1}A_{m-1,q+1}\\
 &=
 \delta_{m-1}^{q+1}
 (\sigma_{m-1}\sigma_{m-2}\cdots\sigma_{m-q-1}).
 \end{aligned}
\]
This is precisely \eqref{eq:power} with \(q+1\) in place of
\(q\), completing the induction.
\end{proof}

\begin{lemma}\label{lem:one-strand}
Let \(2\leq q<m\) and \(B\in\Br_{m-1}\). Then
\begin{equation}\label{eq:one-strand}
 \cl{B\delta_m^{-q}}
 \iso
 \cl{E_qB\delta_{m-1}^{-q}},
\end{equation}
where the closures on the left and right are taken on \(m\)
and \(m-1\) strands, respectively.
The isotopy uses one negative Markov destabilisation and
cyclic conjugations, together with braid equalities.
\end{lemma}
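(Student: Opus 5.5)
The plan is to rewrite the negative block \(\delta_m^{-q}\) so that the generator \(\sigma_{m-1}\) on the last strand occurs exactly once, destabilise that strand, and then reassemble the remaining letters into \(E_q\) by a shift and a central full twist.

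\emph{Step 1: isolate the last-strand generator.} Since \(2\leq q<m\) forces \(m\geq3\), Lemma~\ref{lem:power} applies and gives
\[
 \delta_m^q=\delta_{m-1}^q(\sigma_{m-1}\sigma_{m-2}\cdots\sigma_{m-q}).
\]
Inverting, we get
\[
 \delta_m^{-q}=(\sigma_{m-q}^{-1}\cdots\sigma_{m-2}^{-1})\,\sigma_{m-1}^{-1}\,\delta_{m-1}^{-q}.
\]
Hence \(B\delta_m^{-q}=B\,U\,\sigma_{m-1}^{-1}\,\delta_{m-1}^{-q}\), where \(U=\sigma_{m-q}^{-1}\cdots\sigma_{m-2}^{-1}\) consists of \(q-1\) letters, all with index at most \(m-2\).

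\emph{Step 2: destabilise.} Cyclically move \(\delta_{m-1}^{-q}\) to the front. This gives \(\delta_{m-1}^{-q}BU\sigma_{m-1}^{-1}\), in which every factor before \(\sigma_{m-1}^{-1}\) lies in \(\Br_{m-1}\). This is exactly the support condition required. A negative Markov destabilisation then yields
\[
 \operatorname{cl}_{m-1}(\delta_{m-1}^{-q}BU).
\]

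\emph{Step 3: recognise \(E_q\).} Put \(a=m-q-1\geq0\). Applying \eqref{eq:shift} in \(\Br_{m-1}\) \(a\) times to each letter \(\sigma_i^{-1}\) of \(E_q\), where \(1\leq i\leq q-1\), gives
\[
 U=\delta_{m-1}^{a}E_q\delta_{m-1}^{-a}.
\]
Each application is legitimate because the shifted indices never exceed \(m-2\). This is the main point requiring care, and it is also the only place where \(q\leq m-1\) is used again.

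Substituting this expression for \(U\) and conjugating by \(\delta_{m-1}^{a}\) gives the braid
\[
 \delta_{m-1}^{-q-a}\,B\,\delta_{m-1}^{a}E_q=\delta_{m-1}^{-(m-1)}B\,\delta_{m-1}^aE_q .
\]
By \eqref{eq:full-twist}, \(\delta_{m-1}^{-(m-1)}=\Delta_{m-1}^{-2}\). This full twist is central in \(\Br_{m-1}\), and both \(B\) and \(\delta_{m-1}\) lie in that group, so the commutation takes place inside the correct subgroup. Moving \(\Delta_{m-1}^{-2}\) past \(B\) and combining it with \(\delta_{m-1}^{a}\) gives \(B\delta_{m-1}^{-q}E_q\). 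A final cyclic permutation produces \(E_qB\delta_{m-1}^{-q}\) on \(m-1\) strands, which is the right-hand side of \eqref{eq:one-strand}.

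Altogether, the argument uses one negative destabilisation, cyclic conjugations, and braid equalities, as the statement asserts. The edge case \(q=m-1\) gives \(a=0\), so the shift in Step 3 is vacuous.
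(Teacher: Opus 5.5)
Your proposal is correct and follows essentially the paper's proof: you isolate \(\sigma_{m-1}^{-1}\) with Lemma~\ref{lem:power}, cyclically move \(\delta_{m-1}^{-q}\) to the front and destabilise, and then use the shift identity together with centrality of \(\delta_{m-1}^{m-1}=\Delta_{m-1}^{2}\) in \(\Br_{m-1}\) to turn the leftover letters into \(E_q\). The one cosmetic difference is that the paper first proves the braid identity \(CD^{-q}=D^{-q}E_q\) (with \(D=\delta_{m-1}\)) and only then takes closures, whereas you get the same effect inside the closure by an extra conjugation by \(\delta_{m-1}^{a}\).
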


\begin{proof}
We regard \(\Br_{m-1}\) as the subgroup of \(\Br_m\)
supported on the first \(m-1\) strands.

Set
\[
 D=\delta_{m-1},
 \qquad
 C=\sigma_{m-q}^{-1}\cdots\sigma_{m-2}^{-1},
 \qquad
 a=\sigma_{m-1}^{-1}.
\]
We first rewrite \(\delta_m^{-q}\) in a form containing
exactly one occurrence of the last-strand generator
\(\sigma_{m-1}^{-1}\).
By \eqref{eq:power},
\[
 \delta_m^q
 =
 D^q(\sigma_{m-1}\sigma_{m-2}\cdots\sigma_{m-q}).
\]
Taking inverses reverses the order of the factors, and therefore
\begin{equation}\label{eq:isolation}
 \delta_m^{-q}
 =
 (\sigma_{m-q}^{-1}\cdots\sigma_{m-2}^{-1})
 \sigma_{m-1}^{-1}D^{-q}
 =
 CaD^{-q}.
\end{equation}

We next establish the identity that will move \(C\) past
\(D^{-q}\).
Put \(t=m-1-q\geq0\).
By \eqref{eq:shift}, with \(p=m-1\), we have
\[
 D\sigma_i^{-1}D^{-1}=\sigma_{i+1}^{-1},
 \qquad 1\leq i\leq m-3.
\]
Applying it \(t\) times to each generator of
\(E_q=\sigma_1^{-1}\cdots\sigma_{q-1}^{-1}\) gives
\[
 D^tE_qD^{-t}
 =
 \sigma_{1+t}^{-1}\cdots\sigma_{q-1+t}^{-1}
 =
 \sigma_{m-q}^{-1}\cdots\sigma_{m-2}^{-1}
 =
 C.
\]
When \(t>0\), every application is valid because the largest
index to which a shift is applied is
\[
 (q-1)+(t-1)=m-3.
\]
When \(t=0\), the displayed identity is simply \(C=E_q\).

Since \(q+t=m-1\) and
\(D^{m-1}=\Delta_{m-1}^{2}\) is central in \(\Br_{m-1}\),
we obtain
\[
 D^qCD^{-q}
 =
 D^{q+t}E_qD^{-(q+t)}
 =
 D^{m-1}E_qD^{-(m-1)}
 =
 E_q.
\]
Multiplying this equality on the left by \(D^{-q}\) yields
\begin{equation}\label{eq:transport}
 CD^{-q}=D^{-q}E_q.
\end{equation}

We can now perform the closure moves.
Using \eqref{eq:isolation} and a cyclic conjugation in
\(\Br_m\), we have
\[
 \operatorname{cl}_m(B\delta_m^{-q})
 =
 \operatorname{cl}_m(BCaD^{-q})
 \iso
 \operatorname{cl}_m(D^{-q}BCa).
\]
All three factors \(D^{-q}\), \(B\), and \(C\) belong to
\(\Br_{m-1}\). Hence, setting \(X=D^{-q}BC\), the last braid is
\[
 Xa=X\sigma_{m-1}^{-1},
 \qquad X\in\Br_{m-1}.
\]
It therefore admits a negative Markov destabilisation:
\[
 \operatorname{cl}_m(D^{-q}BCa)
 \iso
 \operatorname{cl}_{m-1}(D^{-q}BC).
\]
This is the only step that changes the number of strands.

Finally, working entirely in \(\Br_{m-1}\), cyclic conjugation
and \eqref{eq:transport} give
\begin{align*}
 \operatorname{cl}_{m-1}(D^{-q}BC)
 &\iso
 \operatorname{cl}_{m-1}(BCD^{-q})\\
 &=
 \operatorname{cl}_{m-1}(BD^{-q}E_q)\\
 &\iso
 \operatorname{cl}_{m-1}(E_qBD^{-q}).
\end{align*}
Substituting \(D=\delta_{m-1}\) proves
\eqref{eq:one-strand}.
All conjugations and the negative Markov destabilisation
preserve the oriented link type.
\end{proof}

\begin{corollary}\label{cor:negative-reduction}
For \(B\in\Br_u\) and \(2\leq q\leq u<P\),
\begin{equation}\label{eq:negative-reduction}
 \cl{B\delta_P^{-q}}
 \iso
 \cl{E_q^{P-u}B\delta_u^{-q}},
\end{equation}
where the closures on the left and right are taken on \(P\)
and \(u\) strands, respectively.
The isotopy can be realised using exactly \(P-u\) negative
Markov destabilisations, together with cyclic conjugations
and braid equalities.
\end{corollary}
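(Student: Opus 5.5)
We argue by induction on \(P-u\), iterating Lemma~\ref{lem:one-strand}. The claim to be proved by descending induction is the following: for each \(m\) with \(u\leq m\leq P\),
\[
 \operatorname{cl}_P(B\delta_P^{-q})
 \iso
 \operatorname{cl}_m(E_q^{P-m}B\delta_m^{-q}),
\]
and this isotopy is realised by exactly \(P-m\) negative destabilisations together with cyclic conjugations and braid equalities. For \(m=P\) the statement is trivial, with zero destabilisations.

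For the inductive step, assume the claim for some \(m\) with \(u<m\leq P\). Put \(B'=E_q^{P-m}B\), and check that \(B'\in\Br_{m-1}\). First, \(B\in\Br_u\subseteq\Br_{m-1}\), since \(u\leq m-1\). Second, \(E_q=\sigma_1^{-1}\cdots\sigma_{q-1}^{-1}\) involves only generators of index at most \(q-1\leq u-1\leq m-2\), so \(E_q\in\Br_q\subseteq\Br_{m-1}\) under the standard right-hand inclusions. The hypotheses of Lemma~\ref{lem:one-strand} also hold: we have \(2\leq q\leq u<m\), so \(2\leq q<m\). The lemma then gives
\[
 \operatorname{cl}_m(B'\delta_m^{-q})\iso\operatorname{cl}_{m-1}(E_qB'\delta_{m-1}^{-q})
 =\operatorname{cl}_{m-1}(E_q^{P-m+1}B\delta_{m-1}^{-q}),
\]
and it uses exactly one negative destabilisation. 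This completes the induction, and the case \(m=u\) is \eqref{eq:negative-reduction} with exactly \(P-u\) destabilisations.

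The only point needing care is the support condition at each step: after the previous step has prepended powers of \(E_q\), the accumulated prefix must still lie in \(\Br_{m-1}\). This is exactly where the hypothesis \(q\leq u\) is used. Note also that \(q<m\) is required by the lemma, and this follows from \(q\leq u<m\). We should also confirm that the count is exact: each application of Lemma~\ref{lem:one-strand} contributes precisely one destabilisation and nothing else changes the strand number, so the total is \(P-u\).
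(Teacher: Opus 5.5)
Your proposal is correct and follows the paper's proof essentially verbatim. The paper also sets \(B_j=E_q^jB\), checks the support condition \(B_j\in\Br_u\subseteq\Br_{m-1}\) with \(m=P-j\) using \(q\leq u\), and applies Lemma~\ref{lem:one-strand} once per removed strand, which gives exactly \(P-u\) negative destabilisations; your descending induction on \(m\) is the same iteration written differently.
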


\begin{proof}
For each integer \(j\) with \(0\leq j\leq P-u\), set
\[
 B_j=E_q^jB.
\]
Since \(q\leq u\), both \(E_q\) and \(B\) are supported on
the first \(u\) strands. Consequently,
\[
 B_j\in\Br_u
 \qquad (0\leq j\leq P-u).
\]

Suppose that \(0\leq j<P-u\), and put \(m=P-j\).
Then \(m\geq u+1\), so
\[
 B_j\in\Br_u\subseteq\Br_{m-1},
 \qquad
 2\leq q\leq u<m.
\]
Thus, all hypotheses of Lemma~\ref{lem:one-strand} hold.
Applying that lemma gives
\begin{align*}
 \operatorname{cl}_{P-j}(B_j\delta_{P-j}^{-q})
 &\iso
 \operatorname{cl}_{P-j-1}
   (E_qB_j\delta_{P-j-1}^{-q})\\
 &=
 \operatorname{cl}_{P-j-1}
   (B_{j+1}\delta_{P-j-1}^{-q}),
\end{align*}
where the equality uses \(E_qB_j=B_{j+1}\).

Iterating this step for
\(j=0,1,\ldots,P-u-1\) reduces the strand number successively
from \(P\) to \(u\). Since \(B_0=B\) and
\(B_{P-u}=E_q^{P-u}B\), the resulting isotopy is
\[
 \operatorname{cl}_P(B\delta_P^{-q})
 \iso
 \operatorname{cl}_u(E_q^{P-u}B\delta_u^{-q}),
\]
as required.

Each application of Lemma~\ref{lem:one-strand} uses exactly
one negative Markov destabilisation. There are \(P-u\)
applications, and all remaining operations are cyclic
conjugations and braid equalities. Hence the reduction uses
exactly \(P-u\) negative Markov destabilisations.
\end{proof}

The reduction in \eqref{eq:negative-reduction} was established
by a geometric argument in
\cite[Proposition 3.1]{dePaivaLiu}.
Here, Lemma~\ref{lem:one-strand} and the iteration above give
an algebraic proof using braid equalities, cyclic conjugations,
and negative Markov destabilisations.
We include this proof to make the braid-reduction arguments
underlying our construction self-contained, with the support
conditions and the number of destabilisations verified explicitly.

\subsection{An arbitrary product of negative prefix blocks}
\label{subsec:prefix}

We next apply the reduction in reverse to an ordered list of blocks.
We allow the widths to increase or decrease, as both possibilities
occur when we start from a general signed braid word.

\begin{proposition}\label{prop:prefix}
Let \(B\in\Br_p\), and let \(u_j,v_j\in\mathbb Z\) satisfy
\(2\leq u_j\leq p\) and \(v_j\geq1\), for \(1\leq j\leq m\). Put
\[
 V_0=0,\qquad V_j=v_1+\cdots+v_j,\qquad H=V_m.
\]
Then
\begin{equation}\label{eq:prefix}
 \cl{E_{u_1}^{v_1}\cdots E_{u_m}^{v_m}B}
 \iso
 \cl{B\delta_p^{u_1}
  \left(\prod_{j=1}^{m-1}
    \delta_{p+V_j}^{u_{j+1}-u_j}\right)
    \delta_{p+H}^{-u_m}}.
\end{equation}
The braid on the right has \(p+H\) strands and reduces to the braid on
the left by exactly \(H\) negative destabilisations.
\end{proposition}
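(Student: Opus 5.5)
The plan is to read \eqref{eq:prefix} from right to left and peel off one block at a time, by applying Corollary~\ref{cor:negative-reduction} exactly \(m\) times. For \(0\le \ell\le m\), I would introduce the braid on \(p+V_\ell\) strands
\[
 X_\ell
 =
 E_{u_{\ell+1}}^{v_{\ell+1}}\cdots E_{u_m}^{v_m}\,B\,
 \delta_p^{u_1}
 \Bigl(\prod_{j=1}^{\ell-1}\delta_{p+V_j}^{u_{j+1}-u_j}\Bigr)
 \delta_{p+V_\ell}^{-u_\ell},
\]
with the convention that for \(\ell=0\) the tail \(\delta_p^{u_1}\cdots\delta_{p+V_0}^{-u_0}\) is read as \(\delta_p^{u_1}\delta_p^{-u_1}=1\). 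Then \(X_0=E_{u_1}^{v_1}\cdots E_{u_m}^{v_m}B\), and \(X_m\) is the braid on the right of \eqref{eq:prefix}. The claim to prove is
\[
 \operatorname{cl}_{p+V_\ell}(X_\ell)\iso\operatorname{cl}_{p+V_{\ell-1}}(X_{\ell-1})
 \qquad (1\le \ell\le m),
\]
and that each step uses exactly \(v_\ell\) negative destabilisations. Chaining these equivalences gives the statement, with \(\sum_\ell v_\ell=H\) destabilisations in total.

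For the step at level \(\ell\), I would write \(X_\ell=B'\delta_P^{-q}\), where \(P=p+V_\ell\) and \(q=u_\ell\). Here \(B'\) is everything before the last factor. Every factor of \(B'\) lies in \(\Br_{u}\) with \(u=p+V_{\ell-1}\). Indeed, the \(E_{u_i}\) satisfy \(u_i\le p\le u\), the braid \(B\) lies in \(\Br_p\), and each \(\delta_{p+V_j}\) with \(j\le\ell-1\) has \(p+V_j\le u\). The hypotheses of Corollary~\ref{cor:negative-reduction} are then \(2\le q=u_\ell\le p\le u<P\). The strict inequality \(u<P\) is exactly where \(v_\ell\ge1\) is used. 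The corollary gives
\[
 \operatorname{cl}_P(B'\delta_P^{-u_\ell})
 \iso
 \operatorname{cl}_u\bigl(E_{u_\ell}^{v_\ell}B'\delta_u^{-u_\ell}\bigr),
\]
using \(P-u=v_\ell\) negative destabilisations.

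It remains to identify the right-hand side with \(X_{\ell-1}\). The new factor \(E_{u_\ell}^{v_\ell}\) lands at the far left, just before \(E_{u_{\ell+1}}^{v_{\ell+1}}\), which is the required ordering. At the end, the last factor of \(B'\) is \(\delta_u^{u_\ell-u_{\ell-1}}\) when \(\ell\ge2\), and it merges with \(\delta_u^{-u_\ell}\) into \(\delta_u^{-u_{\ell-1}}\). When \(\ell=1\), the last factor is \(\delta_p^{u_1}\), which cancels against \(\delta_p^{-u_1}\).

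This argument uses only braid equalities together with Corollary~\ref{cor:negative-reduction}. The main care is bookkeeping, namely the support check \(B'\in\Br_{p+V_{\ell-1}}\) and the boundary cases \(\ell=1\) and \(m=1\). The hypothesis that the widths \(u_j\) may appear in any order causes no difficulty, since only \(u_j\le p\) is needed. The step I expect to need the most attention is the merging of the exponents \(u_\ell-u_{\ell-1}\) and \(-u_\ell\), which has to come out right for both signs of \(u_\ell-u_{\ell-1}\); it works because these are powers of the same element \(\delta_u\).
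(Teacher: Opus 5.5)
Your proposal is correct and follows the paper's own proof. Both apply Corollary~\ref{cor:negative-reduction} \(m\) times from the right. Each step rests on the support check \(2\le u_\ell\le p\le p+V_{\ell-1}<p+V_\ell\), merges the adjacent powers of \(\delta_{p+V_{\ell-1}}\) into exponent \(-u_{\ell-1}\), and ends with the cancellation \(\delta_p^{u_1}\delta_p^{-u_1}=1\), for \(H\) negative destabilisations in total. Your intermediate braids \(X_\ell\) simply make the paper's ``repeat this procedure'' induction explicit.
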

\begin{proof}
We start with the braid on the right. Applying
Corollary~\ref{cor:negative-reduction} to its last block reduces the
number of strands from \(p+V_m\) to \(p+V_{m-1}\) and places
\(E_{u_m}^{v_m}\) at the beginning. The two final powers are now
adjacent, and their exponents add to
\[
 (u_m-u_{m-1})-u_m=-u_{m-1}.
\]
We repeat this procedure. Before the reduction indexed by \(j\), all
preceding factors are supported on at most \(p+V_{j-1}\) strands, and
each \(E\)-block already extracted is supported on the first \(p\)
strands. Hence
\[
 2\leq u_j\leq p\leq p+V_{j-1}<p+V_j
\]
verifies the hypotheses at every step. We place each extracted block
at the beginning, so the final order is
\(E_{u_1}^{v_1}\cdots E_{u_m}^{v_m}\).
At the last reduction the powers \(\delta_p^{u_1}\) and
\(\delta_p^{-u_1}\) cancel. The number of destabilisations is
\(\sum_j v_j=H\). A zero difference in \eqref{eq:prefix} contributes
an identity factor and causes no difficulty.
\end{proof}

\section{Cyclic residues of a braid word}\label{sec:cyclic}

In this section we write the input word as negative prefix blocks,
working in \(\Br_p\) until we apply the prefix formula in the next
section. Combining the ends of the word before taking residues gives
one residue per letter and the height bound \(H\leq pc\).

\subsection{The cyclic factorisation}\label{subsec:factorisation}

We start by writing the signed letters in a common form. Throughout
this section, assume that \(p\geq3\) and \(c\geq1\).
For each input letter, define
\begin{equation}\label{eq:letter-data}
 b_j=\frac{1-\epsilon_j}{2},\qquad
 a_j=i_j-b_j,\qquad t_j=1-b_j,\qquad
 q_j=\begin{cases}p-1,&\epsilon_j=1,\\2,&\epsilon_j=-1.\end{cases}
\end{equation}
We can then write both identities of Lemma~\ref{lem:letters} as
\begin{equation}\label{eq:unified-letter}
 \sigma_{i_j}^{\epsilon_j}
  =E_p^{a_j}E_{q_j}E_p^{-a_j-t_j}.
\end{equation}
Set \(a_{c+1}=a_1\) and
\begin{equation}\label{eq:cyclic-gaps}
 \gamma_j=a_{j+1}-a_j-t_j\qquad(1\leq j\leq c).
\end{equation}
Multiplying the identities in \eqref{eq:unified-letter} and combining
the outer powers across the cyclic boundary gives
\begin{equation}\label{eq:cyclic-equality}
 E_p^{-a_1}\beta E_p^{a_1}
 =E_{q_1}E_p^{\gamma_1}\cdots E_{q_c}E_p^{\gamma_c}.
\end{equation}
This is an exact equality of braids.

The bounds on \(a_j\) and \(t_j\) imply
\(-p\leq\gamma_j<p\). Define
\begin{equation}\label{eq:residues}
 h_j=\begin{cases}\gamma_j+p,&\gamma_j<0,\\
                   \gamma_j,&\gamma_j\geq0,\end{cases}
 \qquad F=\#\{j:\gamma_j<0\}.
\end{equation}
Then \(0\leq h_j<p\). By \eqref{eq:full-twist}, each negative gap
contributes one central positive full twist. Therefore
\begin{equation}\label{eq:U}
 E_p^{-a_1}\beta E_p^{a_1}=U\delta_p^{pF},\qquad
 U=E_{q_1}E_p^{h_1}\cdots E_{q_c}E_p^{h_c}.
\end{equation}
To compute each residue, we compare the gap with zero and add \(p\)
when necessary. Thus, this step requires no Euclidean division.

We delete the zero-height factors in \(U\) and combine adjacent factors
of the same width by adding their heights. We also combine equal
widths across the cyclic boundary. This is allowed by conjugation,
since \(\delta_p^{pF}\) is central in \(\Br_p\). We obtain a cyclic list
\begin{equation}\label{eq:root-list}
 (u_1,v_1),\ldots,(u_m,v_m),\qquad
 2\leq u_j\leq p,\quad v_j>0.
\end{equation}
When \(m>1\), consecutive widths are distinct, including the closing
pair. All cyclic rotations of this list give closures isotopic to
\(\cl\beta\) after the factor \(\delta_p^{pF}\) is restored.
Explicitly, if \(U=XY\) and \(Z=\delta_p^{pF}\), then
\[
 X^{-1}(UZ)X=YZX=YXZ,
\]
because \(Z\) is central. This also justifies moving a terminal block
to the beginning before merging it with the initial block.

\subsection{Height and descents}\label{subsec:height}

We now bound the total height and the number of descents of the cyclic
list. These quantities will determine the size of the presentation in
the next section.
Put \(n_-=\sum_j b_j\). Telescoping \eqref{eq:cyclic-gaps} gives
\[
 \sum_{j=1}^c\gamma_j=-\sum_{j=1}^c t_j=-(c-n_-).
\]
Hence the total height of the cyclic list is
\begin{equation}\label{eq:H}
 H:=\sum_{j=1}^m v_j
   =c+\sum_{j=1}^c h_j=pF+n_-\leq pc.
\end{equation}
The last inequality follows directly from \(h_j\leq p-1\).
There are at most two nonzero blocks per input letter before merging, so
\begin{equation}\label{eq:m-bound}
 m\leq2c.
\end{equation}

For \(m>1\), define the number of cyclic descents by
\begin{equation}\label{eq:K}
 K=\#\{j:u_j>u_{j+1}\},\qquad u_{m+1}=u_1.
\end{equation}
Since the cyclic list is nonconstant, it has at least one descent.
Each descent ends at a block of width less than \(p\), and this block
contains at least one of the \(c\) original factors \(E_{q_j}\): every
other original block has width \(p\). The blocks contain disjoint sets
of original factors. Therefore,
\begin{equation}\label{eq:K-bound}
 1\leq K\leq c.
\end{equation}
This argument also covers \(p=3\), when both signs have inner width
\(q_j=2\).

\begin{remark}\label{rem:boundary}
The cyclic boundary must be combined before \eqref{eq:residues} is
evaluated. Reducing the initial and terminal powers separately can leave
an \(E_p\)-block with height at least \(p\). That block contains a
negative full twist which cancels a positive full twist already extracted.
Equation~\eqref{eq:cyclic-equality} removes this redundancy directly.
\end{remark}

\section{Explicit parameters and the best cyclic cut}\label{sec:parameters}

In this section we find the parameters from the cyclic list and prove
that they represent the input closure. We minimise the final strand
parameter over cyclic cuts, using one division followed by additions
and comparisons.

\subsection{The direct parameter formula}\label{subsec:formula}

We first fix a starting point for the list. Choose a cut after
a cyclic descent and relabel so that \(u_m>u_1\), when \(m>1\).
Set
\[
 V_j=\sum_{i=1}^j v_i,\qquad R=p+H.
\]
Let \(\kappa\) be the number of internal descents. Thus
\(\kappa=K-1\) when \(m>1\), and \(\kappa=0\) when \(m=1\).
Write their positions as
\(j_1<\cdots<j_{\kappa}\), and put \(S_0=0\) and
\begin{equation}\label{eq:r-descents}
 r_\nu=p+V_{j_\nu},\qquad
 S_\nu=\sum_{i=1}^\nu r_i,
 \qquad 1\leq\nu\leq\kappa.
\end{equation}
When \(\kappa=0\), the descent lists are empty.

\begin{theorem}\label{thm:formula}
With the preceding notation, \(\cl\beta\) is represented by
the following list, in the displayed order:
\begin{equation}\label{eq:direct-formula}
\begin{aligned}
 &(p,pF+u_1),\\
 &\left(p+V_j,\ u_{j+1}-u_j
       +(p+V_j)\ind_{\{u_j>u_{j+1}\}}\right)_{j=1}^{m-1},\\
 &\begin{cases}
   (R,-u_m),&\kappa=0,\\[2pt]
   (R,r_1-u_m),\quad
   (R+S_\nu,r_{\nu+1}-r_\nu)_{\nu=1}^{\kappa-1},\quad
   (R+S_{\kappa},-r_{\kappa}),&\kappa>0.
  \end{cases}
\end{aligned}
\end{equation}
It is an offset-zero generalised \(T\)-link presentation.
If \(m>1\), its pair count and final strand parameter are exactly
\begin{equation}\label{eq:exact-size}
 k=m+K,\qquad
 M=p+H+\sum_{\nu=1}^{K-1}r_\nu.
\end{equation}
If \(m=1\), they are \(k=2\) and \(M=p+H\).
In both cases \(-M<s_k\leq-2\). From this output braid to
a conjugate of the input, the proof uses exactly \(M-p\)
negative destabilisations.
\end{theorem}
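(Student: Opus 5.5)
The proof will run in two rounds, each an application of Proposition~\ref{prop:prefix} in the direction from left to right of \eqref{eq:prefix}. The starting point is \eqref{eq:U} together with the cyclic-rotation remark after \eqref{eq:root-list}. With the chosen cut, $\cl\beta\iso\operatorname{cl}_p(E_{u_1}^{v_1}\cdots E_{u_m}^{v_m}\,\delta_p^{pF})$.

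\emph{First round.} I will apply Proposition~\ref{prop:prefix} with $B=\delta_p^{pF}\in\Br_p$. Since $2\le u_j\le p$ and $v_j\ge1$, its hypotheses hold. The closure becomes, on $R=p+H$ strands,
\[
\delta_p^{pF+u_1}\Bigl(\prod_{j=1}^{m-1}\delta_{p+V_j}^{u_{j+1}-u_j}\Bigr)\delta_R^{-u_m},
\]
and this step uses $H$ negative destabilisations. When $\kappa=0$ (including $m=1$), this is already the claimed list.

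\emph{Second round.} At each internal descent $j_\nu$ put $r_\nu=p+V_{j_\nu}$. I will use \eqref{eq:full-twist} to write
\[
\delta_{r_\nu}^{u_{j+1}-u_j}=\delta_{r_\nu}^{u_{j+1}-u_j+r_\nu}\,N_\nu,\qquad N_\nu=\Delta_{r_\nu}^{-2}=E_{r_\nu}^{r_\nu}.
\]
Every factor to the left of $N_\nu$ is supported on at most $r_\nu$ strands, and $N_\nu$ is central in $\Br_{r_\nu}$. So I can move $N_1$ to the front, then $N_2$, and so on. The $N_\nu$ commute with one another, since $\Br_{r_\nu}\subset\Br_{r_{\nu+1}}$. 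This arranges the product as $E_{r_1}^{r_1}\cdots E_{r_\kappa}^{r_\kappa}B'$, where $B'\in\Br_R$ consists of the positive blocks followed by $\delta_R^{-u_m}$.

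I will then apply Proposition~\ref{prop:prefix} again, with $R$ in place of $p$, widths $r_\nu\le R$, and heights $r_\nu$. The appended factor $\delta_R^{r_1}$ merges with the terminal $\delta_R^{-u_m}$ of $B'$, giving exponent $r_1-u_m$. The remaining blocks are $\delta_{R+S_\nu}^{r_{\nu+1}-r_\nu}$ and the final block is $\delta_{R+S_\kappa}^{-r_\kappa}$, which is exactly \eqref{eq:direct-formula}. This round uses $S_\kappa$ further negative destabilisations, so the total is $H+S_\kappa=M-p$. The centrality check and the support condition for moving the $N_\nu$ are where the most care is needed. That is the step I expect to be the main obstacle, since a full twist is central only in its own $\Br_{r_\nu}$.

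\emph{Validity of the list.} It remains to check Definition~\ref{def:gt}.
\begin{itemize}
\item The strand parameters increase strictly: $p<p+V_1<\cdots<R<R+S_1<\cdots$.
\item The exponent $pF+u_1$ is positive.
\item A non-descent exponent $u_{j+1}-u_j$ is positive, because consecutive widths are distinct.
\item A descent exponent $u_{j+1}-u_j+p+V_j$ is positive, because $u_j\le p$.
\item The exponent $r_1-u_m$ is positive, because $r_1>p\ge u_m$.
\item Each $r_{\nu+1}-r_\nu$ is positive.
\item The last exponent is $-u_m$ or $-r_\kappa$. In either case it is at most $-2$ and greater than $-M$.
\end{itemize}
Counting pairs gives $1+(m-1)+1+\kappa=m+K$ when $m>1$, and $k=2$ with $M=R$ when $m=1$. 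The final strand parameter is $R+S_{K-1}$, which is \eqref{eq:exact-size}.
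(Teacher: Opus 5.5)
Your proposal is correct and follows the paper's proof essentially step for step. The steps match: the first application of Proposition~\ref{prop:prefix} with \(B=\delta_p^{pF}\), the splitting off of the full twists \(E_{r_\nu}^{r_\nu}\) at internal descents (justified by centrality in \(\Br_{r_\nu}\) and the support of the preceding factors), the second application with base width \(R\) and heights \(r_\nu\), and the same sign, pair-count and destabilisation checks. The only difference is cosmetic: you place each twist after \(\delta_{r_\nu}^{r_\nu-z_\nu}\) and then reorder the commuting twists, whereas the paper writes \(E_{r_\nu}^{r_\nu}\delta_{r_\nu}^{r_\nu-z_\nu}\) and inserts each twist directly after the earlier ones.
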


\begin{proof}
By Section~\ref{sec:cyclic}, the input closure is isotopic to
\(\cl{E_{u_1}^{v_1}\cdots E_{u_m}^{v_m}\delta_p^{pF}}\).
Apply Proposition~\ref{prop:prefix}, with base width \(p\)
and \(B=\delta_p^{pF}\). This gives the signed intermediate braid
\begin{equation}\label{eq:seed}
 \delta_p^{pF+u_1}
 \left(\prod_{j=1}^{m-1}
   \delta_{p+V_j}^{u_{j+1}-u_j}\right)
 \delta_R^{-u_m}\in\Br_R.
\end{equation}
The widths in this braid increase strictly. If there are no
internal descents, all intermediate exponents are positive,
so \eqref{eq:seed} already gives the required presentation.

Suppose now that \(\kappa>0\). At a descent, write
\(z_\nu=u_{j_\nu}-u_{j_\nu+1}\). Then
\[
 1\leq z_\nu\leq p-2<r_\nu.
\]
We separate a full twist by writing
\begin{equation}\label{eq:separation}
 \delta_{r_\nu}^{-z_\nu}
  =E_{r_\nu}^{r_\nu}\delta_{r_\nu}^{r_\nu-z_\nu}.
\end{equation}
The full twist \(E_{r_\nu}^{r_\nu}\) commutes with all preceding
factors, since they are supported on at most \(r_\nu\) strands.
We extract these full twists in increasing order of \(\nu\),
placing each immediately after those already extracted.
Thus, \eqref{eq:seed} becomes
\begin{equation}\label{eq:second-prefix}
 E_{r_1}^{r_1}\cdots
 E_{r_{\kappa}}^{r_{\kappa}}A_+\delta_R^{-u_m},
\end{equation}
where \(A_+\) consists of the initial and intermediate factors
in \eqref{eq:seed}, with every negative exponent increased
by its width. In particular, the factors of \(A_+\) have
precisely the first two lines of \eqref{eq:direct-formula},
and all their exponents are positive.

Apply Proposition~\ref{prop:prefix} again, this time with base
width \(R\), prefix widths
\(r_1<\cdots<r_{\kappa}<R\), prefix heights \(r_\nu\),
and \(B=A_+\delta_R^{-u_m}\in\Br_R\). The result is
\begin{equation}\label{eq:second-output}
 A_+\delta_R^{r_1-u_m}
  \left(\prod_{\nu=1}^{\kappa-1}
     \delta_{R+S_\nu}^{r_{\nu+1}-r_\nu}\right)
  \delta_{R+S_{\kappa}}^{-r_{\kappa}}.
\end{equation}
The support conditions for this second application hold because
\[
 2\leq r_\nu<R\leq R+S_{\nu-1}<R+S_\nu.
\]
Also \(r_1>p\geq u_m\), so the exponent at width \(R\) is
positive. The later intermediate exponents are positive
differences of strictly increasing widths. The last exponent
is \(-r_{\kappa}\leq-2\). This proves
\eqref{eq:direct-formula} and all sign and order conditions.

For \(m>1\), there are \(m\) pairs before the last line of
the formula, and that line contributes \(\kappa+1\) pairs,
giving \(m+\kappa+1=m+K\). The final width is
\(R+S_{\kappa}\). For \(m=1\), the list is simply
\[
 ((p,pF+u_1),(p+H,-u_1)).
\]
The final negative exponent has absolute value
\(u_m\leq p<M\) when \(\kappa=0\), and
\(r_{\kappa}<R<M\) when \(\kappa>0\).
Finally, the first prefix reduction removes \(H\) strands,
and the second, when needed, removes \(S_{\kappa}\) strands.
Their sum is \(M-p\), and all the destabilisations are negative.
\end{proof}

The proof applies the prefix formula twice, but the algorithm
computes \eqref{eq:direct-formula} directly. Thus, we do not
need to store an intermediate signed parameter list or the
expanded output braid.

\subsection{Optimising the cut}\label{subsec:cut}

We next find the cut which minimises \(M\) in \eqref{eq:exact-size}.
We keep the blocks of the cyclic list fixed and compare their possible
starting points. No further braid relations are used in this
minimisation.

Regard the list as a circle of total height \(H\). A descent location is
the boundary immediately after a block whose width exceeds the next
width. If \(x\) is the chosen descent location, the other \(K-1\)
descent locations occur at positive clockwise distances \(D_x(y)\).
Equation~\eqref{eq:exact-size} becomes
\begin{equation}\label{eq:cut-score}
 M_x=Kp+H+\sum_{y\neq x}D_x(y).
\end{equation}
This includes \(K=1\), when the sum is empty.

It suffices to consider cuts after descents. Indeed, a cut at any other
block boundary leaves all \(K\) descents internal. If the first descent
occurs at distance \(a>0\), moving the cut to that descent removes its
contribution \(p+a\) and subtracts \(a\) from the distance to every
remaining descent. The strand parameter decreases by \(p+Ka\), and
the pair count decreases by one. Thus a descent cut dominates every
non-descent cut in both quantities.

\begin{proposition}\label{prop:cut-bound}
For \(m>1\), the best descent cut satisfies
\begin{equation}\label{eq:average-bound}
 M\leq Kp+\frac{K+1}{2}H.
\end{equation}
Consequently the output of Algorithm~\ref{alg:core} satisfies
\(k\leq3c\) and \(M\leq pc(c+3)/2\).
\end{proposition}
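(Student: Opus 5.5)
Averaging over the $K$ descent cuts is the core of the argument. By \eqref{eq:cut-score}, the best descent cut has $M=\min_x M_x\leq \frac1K\sum_x M_x$. Summing \eqref{eq:cut-score} over all $K$ descent locations $x$ gives
\[
 \sum_x M_x = K(Kp+H)+\sum_{x}\sum_{y\neq x}D_x(y).
\]
I will evaluate the double sum by pairing. For distinct descent locations $x,y$, the clockwise distances satisfy $D_x(y)+D_y(x)=H$, since the two arcs together make up the whole circle of height $H$. There are $\binom K2$ unordered pairs, so the double sum equals $H\binom K2=HK(K-1)/2$. Dividing by $K$ then gives
\[
 M\leq Kp+H+\frac{(K-1)H}{2}=Kp+\frac{K+1}{2}H,
\]
which is \eqref{eq:average-bound}. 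The case $K=1$ is consistent with this, because the sum is empty and $M=p+H$. This is the only point where I need any care: the distances must be measured in height along the circle, which matches the definition $r_\nu=p+V_{j_\nu}$ once the start is shifted cyclically.

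For the consequences, I will substitute the bounds already established in Section~\ref{sec:cyclic}.

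\textbf{Pair count.} Theorem~\ref{thm:formula} gives $k=m+K$ when $m>1$. With $m\leq2c$ from \eqref{eq:m-bound} and $K\leq c$ from \eqref{eq:K-bound}, this yields $k\leq 3c$. When $m=1$ we have $k=2\leq3c$.

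\textbf{Strand parameter.} Using $K\leq c$ and $H\leq pc$ from \eqref{eq:H} in \eqref{eq:average-bound} gives
\[
 M\leq cp+\frac{c+1}{2}\,pc=\frac{pc(c+3)}{2}.
\]
When $m=1$, $M=p+H\leq p+pc\leq pc(c+3)/2$, since $c\geq1$.

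One caveat concerns Algorithm~\ref{alg:core}, whose precise steps (simplifications and shortcuts) come later in the paper. I will assume its output is either the formula \eqref{eq:direct-formula} at the best cut or a presentation obtained by a shortcut that is no larger. Checking that those shortcuts do not increase $k$ or $M$ is the remaining obstacle, and I expect it to be a routine case check.
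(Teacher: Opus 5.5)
Your proof is correct and is essentially the paper's argument: average \eqref{eq:cut-score} over the $K$ descent cuts, use that the two directed distances of each unordered pair of descent locations sum to $H$, then substitute $K\le c$, $H\le pc$ and $m\le 2c$, treating $m=1$ separately. Your closing caveat is unnecessary: Algorithm~\ref{alg:core} is only the arithmetic branch and outputs \eqref{eq:direct-formula} at the minimising descent cut, with no simplifications or shortcuts; those belong to Algorithm~\ref{alg:complete}, whose bounds are handled separately in Proposition~\ref{prop:wrapper-size}.
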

\begin{proof}
For each unordered pair of distinct descent locations, its two directed
distances sum to \(H\). Averaging \eqref{eq:cut-score} over the \(K\)
possible cuts therefore gives
\[
 \frac1K\sum_x M_x
 =Kp+H+\frac{\binom K2H}{K}
 =Kp+\frac{K+1}{2}H.
\]
The minimum is at most the average. Using \(K\leq c\), \(H\leq pc\),
and \(m\leq2c\), we obtain
\[
 M\leq cp+\frac{c+1}{2}pc=\frac{pc(c+3)}2,
 \qquad k=m+K\leq3c.
\]
When \(m=1\), \(k=2\leq3c\) and
\(M=p+H\leq p(c+1)\leq pc(c+3)/2\), so the same bounds hold.
\end{proof}

\subsection{An additive best-cut computation}\label{subsec:additive-cut}

We now compute the minimum in a single scan of the list. The recurrence
below uses additions and comparisons at each candidate cut, avoiding
repeated products of growing integers and giving the stated bit bound.

In the stored cyclic order, let
\[
 0\leq x_0<x_1<\cdots<x_{K-1}<H
\]
be the height coordinates of the descent locations, and let
\(S=\sum_{\ell=0}^{K-1}x_\ell\). They are computed by cumulative
addition along the root list. For a cut at \(x_\ell\), the locations with
smaller coordinates wrap once around the circle. Hence
\begin{equation}\label{eq:cut-coordinate}
 M_\ell=Kp+H+S+\ell H-Kx_\ell.
\end{equation}
It remains to minimise \(\ell H-Kx_\ell\).
Perform the single division
\[
 H=KQ+A,\qquad 0\leq A<K.
\]
Write \(z_\ell=\lfloor\ell H/K\rfloor\) and
\(b_\ell=\ell H-Kz_\ell\). Then
\begin{equation}\label{eq:lex-score}
 \ell H-Kx_\ell=K(z_\ell-x_\ell)+b_\ell,
 \qquad 0\leq b_\ell<K.
\end{equation}
Minimising the right side is exactly lexicographic minimisation of
\((z_\ell-x_\ell,b_\ell)\). Starting from \(z_0=b_0=0\), update
\begin{equation}\label{eq:carry}
 \begin{aligned}
 z&\leftarrow z+Q,& b&\leftarrow b+A;\\[-2pt]
 &\text{if }b\geq K,& b&\leftarrow b-K,\quad z\leftarrow z+1.
 \end{aligned}
\end{equation}
By induction using the division algorithm, these updates give
\(z_{\ell+1},b_{\ell+1}\) at the next step. Thus, the loop uses only
additions, subtractions, and comparisons. Among equal minima choose the cut with
smallest stored block index; this makes the algorithm deterministic.
After the scan, \eqref{eq:cut-coordinate} can be evaluated with a
constant number of products, or the width can be read from the emitted
parameters.

\begin{algorithm}\label{alg:core}
For an expanded word in \(\Br_p\), with \(p\geq3\) and \(c\geq1\),
perform the following steps.
\begin{enumerate}
\item Compute the letter data and cyclic residues from
\eqref{eq:letter-data}--\eqref{eq:residues}, and count \(F\).
Append each block \((q_j,1)\), followed by \((p,h_j)\).
Omit zero heights and merge equal successive widths.
\item Merge equal first and last widths, if present. This gives the
cyclic list \eqref{eq:root-list} and its height \(H\).
\item If \(m>1\), find its descent cuts and use
\eqref{eq:lex-score}--\eqref{eq:carry} to choose the minimum. Rotate the
list to start immediately after that descent. If \(m=1\), retain its
single block.
\item Scan the rotated list, accumulating \(V_j\) and recording the
internal descent widths \(r_\nu\). Emit the pairs
\eqref{eq:direct-formula} in their displayed order.
\end{enumerate}
\end{algorithm}

\begin{corollary}\label{cor:core-correct}
Algorithm~\ref{alg:core} always terminates and returns the presentation
in Theorem~\ref{thm:formula}, with the bounds of
Proposition~\ref{prop:cut-bound}. Its cut minimises the final strand
parameter among all block-boundary cuts in this construction.
\end{corollary}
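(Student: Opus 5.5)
We verify the three assertions of Corollary~\ref{cor:core-correct} in order: termination, agreement with Theorem~\ref{thm:formula}, and optimality of the cut.

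\emph{Termination.} Each step of Algorithm~\ref{alg:core} is a finite loop over explicit finite data.
\begin{itemize}
\item Step (1) is one pass over the \(c\) letters. It produces at most \(2c\) blocks, and each merge strictly shortens the list.
\item Step (2) performs at most one boundary merge.
\item Step (3) is a single scan over the \(K\leq c\) descent locations.
\item Step (4) is a single scan over \(m\leq 2c\) blocks.
\end{itemize}
Hence the algorithm terminates.

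\emph{Agreement with Theorem~\ref{thm:formula}.} Steps (1)--(2) compute exactly the cyclic list \eqref{eq:root-list}, together with \(F\) and \(H\), as defined in Section~\ref{sec:cyclic}. Two points need checking.
\begin{itemize}
\item Appending \((q_j,1)\) and \((p,h_j)\) in order realises the factorisation \eqref{eq:U}.
\item Deleting zero heights, merging equal neighbours, and merging across the cyclic boundary are the operations justified there by centrality of \(\delta_p^{pF}\).
\end{itemize}
When \(m>1\), step (3) rotates the list to begin immediately after a descent, so \(u_m>u_1\); when \(m=1\), no rotation is needed. In either case the hypotheses of Theorem~\ref{thm:formula} hold. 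Step (4) accumulates \(V_j\) and records \(r_\nu=p+V_{j_\nu}\) at the internal descents. From these quantities it emits the pairs \eqref{eq:direct-formula}, with \(S_\nu\) formed by cumulative addition. The output is therefore literally the list in Theorem~\ref{thm:formula}, which represents \(\cl\beta\) and is an offset-zero presentation. The size bounds \(k\leq3c\) and \(M\leq pc(c+3)/2\) follow from Proposition~\ref{prop:cut-bound}, since the chosen cut is a minimising descent cut. That proposition bounds the minimum by the average, and covers the case \(m=1\) separately.

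\emph{Optimality of the cut.} This is the main point. By \eqref{eq:cut-score} and the dominance argument preceding Proposition~\ref{prop:cut-bound}, every non-descent cut has strictly larger \(M\) than some descent cut. It therefore suffices to show that step (3) minimises \(M_\ell\) over descent cuts \(\ell\). There are three steps.
\begin{enumerate}
\item Derive \eqref{eq:cut-coordinate}. For the cut at \(x_\ell\), the distance to \(x_{\ell'}\) is \(x_{\ell'}-x_\ell\) when \(\ell'>\ell\), and \(H+x_{\ell'}-x_\ell\) when \(\ell'<\ell\). Summing over \(\ell'\neq\ell\) gives \(S+\ell H-Kx_\ell\). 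Minimising \(M_\ell\) is thus equivalent to minimising \(\ell H-Kx_\ell\).
\item Check the carry recurrence \eqref{eq:carry}. By induction on \(\ell\), we show \(z=\lfloor \ell H/K\rfloor\) and \(b=\ell H-Kz\in[0,K)\). Adding \(H=KQ+A\) gives the candidate pair \(z+Q\) and \(b+A\), where \(b+A<2K\). At most one correction by \(K\) is then needed, which is exactly the update prescribed.
\item Since \(0\leq b_\ell<K\), the value \(K(z_\ell-x_\ell)+b_\ell\) is ordered exactly as the pair \((z_\ell-x_\ell,b_\ell)\) in lexicographic order. Hence the lexicographic minimum found in step (3) is a true minimiser of \(M_\ell\).
\end{enumerate}
The tie-break by smallest stored block index makes the choice deterministic without affecting minimality.
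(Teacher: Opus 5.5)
Your proposal is correct and follows the same route as the paper. The paper's proof notes that each scan is over at most \(2c\) blocks, that the factorisation, prefix formula and cut calculation give the required presentation, and that non-descent cuts are dominated as shown in Section~\ref{subsec:cut}; you additionally spell out the derivation of \eqref{eq:cut-coordinate}, the carry recurrence and the lexicographic comparison, which the paper establishes in Section~\ref{subsec:additive-cut} before the corollary rather than inside its proof.
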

\begin{proof}
Each scan is over a finite list of at most \(2c\) blocks. The preceding
factorisation, prefix formula, and cut calculation prove that the
computed parameters give the required presentation. Cuts away from
descents give larger values, as shown in Section~\ref{subsec:cut}.
\end{proof}

\section{Elementary simplification and direct recognition}
\label{sec:shortcuts}

Some input words can be simplified or already give a much smaller
presentation. In this section we describe deterministic rules for
recognising several such cases before applying Algorithm~\ref{alg:core}. We justify
each rule by braid identities or Markov moves. These rules do not
require a search through braid relations.

\subsection{Cyclic cancellations and endpoint destabilisations}
\label{subsec:simplification}

We first simplify the input word, allowing its starting point to
change. This treats the word as circular, since a cyclic rotation is a
conjugation.
For a nonempty input put
\[
 s=\min_j i_j-1.
\]
Replace each signed index \(\epsilon_j i_j\) by
\(\epsilon_j(i_j-s)\), retaining the declared number \(p\).
This replacement is valid: iterating \eqref{eq:shift} backwards gives
\[
 E_p^{-s}\sigma_i^{\pm1}E_p^s=\sigma_{i-s}^{\pm1}
 \qquad(i\geq s+1).
\]
Thus, the uniform index shift is realised by conjugation and retains
all unused strands.

After the shift, apply the following rules, repeating after each deletion:
\begin{enumerate}
\item Cancel cyclically adjacent inverse letters. If the pair crosses the
chosen boundary, first rotate the word to put the pair next to each other
in a linear representative.
\item If \(\sigma_{p-1}^{\pm1}\) occurs exactly once, rotate that
letter to the end and perform a right Markov destabilisation of its sign.
Decrease \(p\) by one.
\item Otherwise, if \(\sigma_1^{\pm1}\) occurs exactly once, perform
the corresponding left destabilisation. Decrease \(p\) by one and
decrease every remaining unsigned index by one.
\end{enumerate}
In the second and third rules, the endpoint indices are those of the
current declared braid. If unused strands remain to the right of the
largest used generator, we cannot destabilise that generator.

For completeness, the left rule also follows from right destabilisation.
The automorphism
\(\rho_p(\sigma_i)=\sigma_{p-i}\) is conjugation by \(\Delta_p\).
If \(A\) uses indices \(2,\ldots,p-1\), then
\(\rho_p(A)\sigma_{p-1}^{\epsilon}\) admits a right destabilisation.
Applying \(\rho_{p-1}\) afterwards changes an original index \(i\)
to \(i-1\). Both reflections preserve the closure, so this is the
claimed left rule, for either sign.

To fix a deterministic order, cancel all currently queued inverse pairs,
then prefer a right destabilisation over a left one, and return to the
cancellation queue after a destabilisation. Any fixed order of the initial
queue suffices; we use increasing original letter position. At termination
write the working input as \((p',\beta')\), with length \(c'\).
Then \(p'\leq p\), \(c'\leq c\), and
\(\cl{\beta'}\iso\cl\beta\). These rules give sufficient conditions for simplification. If none
applies, this does not prove that the braid is minimal.

\subsection{A linear implementation of the deletion rules}
\label{subsec:deletion-data}

We now implement the deletion rules without scanning the whole word
after every deletion. The data structure below follows the specified
order in linear total work, including when \(p\) is much larger than
\(c\).

Assign each original letter an identifier in
\(\{0,\ldots,c-1\}\). Store the surviving letters in a circular doubly
linked list, with a live/deleted flag and a distinguished head. Queue all
initial adjacencies. When a letter is deleted, queue the newly exposed
adjacency at its predecessor. An entry concerning a deleted letter is
ignored. Each deletion creates at most one entry. Thus the number of
queue insertions, and the number of adjacency checks, is \(O(c)\).
Ordinary inverse cancellation removes two entries; a destabilisation
removes one.

Keep the original unsigned indices after the uniform shift. A left
destabilisation changes a lazy offset \(\lambda\); a surviving original
index \(i\) then means current index \(i-\lambda\).
Let \(P\) be the strand count at the start of this deletion stage, and
let \(r\) be the number of right destabilisations already made. Its
current left endpoint generator has original index \(\lambda+1\),
and its current right endpoint generator has original index \(P-r-1\).

At most \(c\) destabilisations can occur. Therefore only the original
generator indices in
\[
 \{1,\ldots,\min(c,P-1)\}\quad\text{or}\quad
 \{\max(1,P-c),\ldots,P-1\}
\]
can ever become eligible endpoint generators. For each of these two
zones store two arrays: the count of live occurrences of an index and
the bitwise XOR of their letter identifiers. If a count is one, the XOR
is exactly the identifier of its unique occurrence. On deleting a letter,
update the arrays for every zone to which its original index belongs;
overlapping zones are updated twice, once in each copy.

The four arrays have total length \(O(c)\). They are initialised in one
pass, and each deletion changes only a constant number of entries.
Endpoint eligibility is tested by a constant number of array accesses.
The head is rotated by a pointer change, and all surviving indices are
translated by the lazy offset in one final scan. This gives \(O(c)\)
integer operations and \(O(c)\) stored integers, with no hash table and
no loop through the unused strand indices. Recording the identifiers
deleted at each step also gives a short, independently checkable record
of the elementary simplification.

\subsection{Literal recognition of twisting words}\label{subsec:recognition}

In this subsection we recognise words already written in certain
twisting forms. We compare the Artin letters up to cyclic rotation.
These tests recognise the given presentations; they do not decide
whether an arbitrary word represents a link in one of these families.

For an all-positive word, first check whether \(1\) occurs. If it does
not, the positive recognition test fails and we proceed to Algorithm~\ref{alg:core}.
Otherwise, rotate to an occurrence of \(1\). Split before each occurrence
of \(1\), and check that each resulting block is exactly
\[
 1,2,\ldots,r-1
\]
for some width \(r\). A missing or repeated intermediate index causes
failure. Let \(r^{(1)},\ldots,r^{(t)}\) be the resulting cyclic width
list. It can be rotated into nondecreasing order precisely when it has
at most one cyclic strict descent. Cut after that descent if it exists,
and combine equal successive widths. The result is an ordinary positive
\(T\)-word. Every test and merge is a linear scan; no sorting is required.

For an all-negative word, test whether a cyclic rotation is
\(\delta_{p'}^{-a}\), with \(a\geq2\). A nonempty such word must contain
\(-(p'-1)\); begin there and check repeated blocks
\[
 -(p'-1),-(p'-2),\ldots,-1.
\]
The length must be a multiple of \(p'-1\). This test inspects only
the supplied letters; it does not generate a block of length \(p'-1\)
in advance.

Words containing both positive and negative letters are passed
to the arithmetic conversion in Algorithm~\ref{alg:core}.
The recognition tests above together use \(O(c')\) integer
operations.

\subsection{Unused strands and the boundary cases}\label{subsec:padding}

We next include the declared strand count and the normalisation
\(|s_k|\geq2\) in the direct returns. The following identity allows us
to do this while keeping the offset zero.

\begin{lemma}\label{lem:padding}
For \(p\geq2\) and \(B\in\Br_p\),
\begin{equation}\label{eq:padding}
 \cl{B\delta_p^{2p}\delta_{2p}^{-p}}\iso\cl B.
\end{equation}
The reduction from the left to the right uses \(p\) negative
destabilisations, followed by braid equality and conjugacy.
\end{lemma}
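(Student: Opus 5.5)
This is a direct application of Corollary~\ref{cor:negative-reduction}, followed by one use of centrality of the full twist. I apply the corollary to the braid $B\delta_p^{2p}\delta_{2p}^{-p}$ on $2p$ strands, taking $P=2p$, $u=p$, $q=p$, and using $B\delta_p^{2p}\in\Br_p$ in the role of $B$.

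\textbf{Step 1: check the hypotheses.} The corollary requires $2\leq q\leq u<P$, which here reads $2\leq p\leq p<2p$. This holds because $p\geq2$. The factor $B\delta_p^{2p}$ is supported on the first $p$ strands, as required.

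\textbf{Step 2: apply the reduction.} The corollary gives
\[
 \operatorname{cl}_{2p}(B\delta_p^{2p}\delta_{2p}^{-p})
 \iso
 \operatorname{cl}_p(E_p^{p}B\delta_p^{2p}\delta_p^{-p}),
\]
using exactly $P-u=p$ negative destabilisations.

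\textbf{Step 3: simplify with braid equalities.} On the right, $\delta_p^{2p}\delta_p^{-p}=\delta_p^{p}$. By \eqref{eq:full-twist}, $E_p^p=\Delta_p^{-2}$ and $\delta_p^p=\Delta_p^2$, so the braid equals $\Delta_p^{-2}B\Delta_p^{2}$.

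\textbf{Step 4: finish by centrality.} Since $\Delta_p^2$ is central in $\Br_p$, this braid equals $B$. Alternatively, it is a conjugate of $B$, and conjugation preserves the closure. Either way, the right-hand closure is $\cl B$ on $p$ strands. This matches the stated sequence: $p$ negative destabilisations, then braid equality and conjugacy.

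There is no substantial obstacle. The only care needed is in Step 1, confirming that the boundary case $q=u=p$ is allowed by Corollary~\ref{cor:negative-reduction}; it is, since the corollary only requires $q\leq u$. One should also keep track that the power of $E_p$ is $P-u=p$, which is exactly what produces a full negative twist.
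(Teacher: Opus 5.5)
Your proposal is correct and follows the paper's route. The paper invokes Proposition~\ref{prop:prefix} with a single prefix block of width \(p\) and height \(p\) and base braid \(B\delta_p^p\), and that single-block case is exactly your one application of Corollary~\ref{cor:negative-reduction}, followed by the same cancellation \(E_p^p\delta_p^p=\Delta_p^{-2}\Delta_p^{2}=1\) using centrality in \(\Br_p\).
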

\begin{proof}
In Proposition~\ref{prop:prefix} take one prefix block, of width \(p\)
and height \(p\), with base braid \(B\delta_p^p\). It gives
\[
 \cl{B\delta_p^{2p}\delta_{2p}^{-p}}
 \iso\cl{E_p^pB\delta_p^p}=\cl B,
\]
because \(E_p^p=\delta_p^{-p}\) is central in \(\Br_p\).
This is also the case \(k=2\) of
\cite[Corollary 3.3]{dePaivaLiu}.
\end{proof}

If a recognised positive \(T\)-word \(B\) uses all \(p'\) declared
strands and has final exponent at least two, return it directly.
Otherwise use \eqref{eq:padding}. Append \((p',2p')\) and
\((2p',-p')\), merging the first appended pair with the previous one
if it already has width \(p'\). This produces at most two extra pairs
and exactly \(2p'\) strands. In particular, unused strands of \(B\)
remain part of the represented link.

An empty working word on \(q\geq2\) strands is the \(q\)-component
unlink and is returned as
\begin{equation}\label{eq:unlink}
 T((q,2q),(2q,-q)),
\end{equation}
by Lemma~\ref{lem:padding} with \(B=1\).
For the empty one-strand word we return
\begin{equation}\label{eq:unknot}
 T((2,2),(3,-2)).
\end{equation}
Indeed, Proposition~\ref{prop:prefix} with base width two, one block
\(E_2\), and \(B=1\), reduces \(\delta_2^2\delta_3^{-2}\) to
\(E_2=\sigma_1^{-1}\). One further negative destabilisation gives the
one-strand unknot.

If the nonempty working word has two strands, cyclic free reduction
leaves letters of one sign. It is \(\sigma_1^e\). A single remaining
letter would have been destabilised, so \(|e|\geq2\), and we return
\(T((2,e))\). A nonempty valid word on one strand is impossible.

\subsection{The complete algorithm}\label{subsec:wrapper}

We now combine the preceding simplifications and recognition tests with
the arithmetic conversion. This gives the complete algorithm in
Theorem~\ref{thm:main}.

\begin{algorithm}\label{alg:complete}
Given a positive declared strand number and a valid expanded signed
Artin word, proceed as follows.
\begin{enumerate}
\item For nonempty input, make the uniform index shift in
Section~\ref{subsec:simplification}. Perform cyclic inverse cancellations
and signed endpoint destabilisations, in the specified deterministic
order, using Section~\ref{subsec:deletion-data}.
\item If the working word is empty or has two strands, return the
presentation specified in Section~\ref{subsec:padding}.
\item Apply the literal cyclic tests of Section~\ref{subsec:recognition}.
On success, return the recognised presentation, with padding if required.
\item On failure, the working input has \(p'\geq3\), \(c'\geq1\).
Apply Algorithm~\ref{alg:core} to \((p',\beta')\) and return its output.
\end{enumerate}
\end{algorithm}

\begin{proposition}\label{prop:wrapper-size}
Algorithm~\ref{alg:complete} preserves the oriented input closure and
returns an offset-zero presentation satisfying Definition~\ref{def:gt}.
For initial \(p\geq3\) and \(c\geq1\), it satisfies
\eqref{eq:main-size}.
\end{proposition}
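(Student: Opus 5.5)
The proof goes branch by branch through Algorithm~\ref{alg:complete}. Correctness and the size bounds are handled separately, and the sizes are always measured against the initial \(p\geq3\), \(c\geq1\).

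\textbf{Correctness.} For preservation of the closure, every operation in step (1) is one of the following:
\begin{itemize}
\item the uniform shift, which is conjugation by \(E_p^{s}\) via \eqref{eq:shift};
\item cyclic rotation, which is conjugation;
\item free cancellation, which is a braid equality;
\item a signed right or left Markov destabilisation.
\end{itemize}
For a right destabilisation, the support condition holds because \(\sigma_{p-1}^{\pm1}\) occurs exactly once, so after rotating it to the end the remaining word lies in \(\Br_{p-1}\). The left case reduces to the right one via \(\rho_p\), as explained in Section~\ref{subsec:simplification}. Hence \(\cl{\beta'}\iso\cl\beta\) with \(p'\leq p\) and \(c'\leq c\).

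The returns then split into cases.
\begin{itemize}
\item Empty words, and \(p'=1\): use Lemma~\ref{lem:padding} with \(B=1\), and Proposition~\ref{prop:prefix} followed by one destabilisation, as computed in \eqref{eq:unlink} and \eqref{eq:unknot}.
\item Two-strand words: these are \(\sigma_1^e\) with \(|e|\geq2\), since a single remaining letter would already have been destabilised.
\item Recognised positive words: either they are returned directly, or they are padded by Lemma~\ref{lem:padding}. The widths stay strictly increasing because the appended widths \(p'<2p'\) exceed or merge with the last width. The final exponent \(-p'\) satisfies \(|{-p'}|\geq2\).
\item Recognised negative words \(\delta_{p'}^{-a}\) with \(a\geq2\): return \(T((p',-a))\). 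Here I need the word to use all \(p'\) strands. The test starts at \(-(p'-1)\), so this holds.
\item All other cases: correctness follows from Corollary~\ref{cor:core-correct} and Theorem~\ref{thm:formula}.
\end{itemize}

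\textbf{Size bounds.} In the generic branch, Proposition~\ref{prop:cut-bound} applied to \((p',c')\) gives \(k\leq3c'\leq3c\) and \(M\leq p'c'(c'+3)/2\leq pc(c+3)/2\), because the bound is monotone in both variables.

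In the exceptional branches the output is explicit, so I check the bounds directly.
\begin{itemize}
\item Padded cases: \(M=2p'\) and \(k\leq2\), or at most the number of recognised blocks plus two. Each block of a positive \(T\)-word consumes at least one letter, so \(k\leq c'+2\leq3c\) when \(c\geq1\). Also \(2p'\leq2p\leq pc(c+3)/2\), since \(c(c+3)/2\geq2\) for \(c\geq1\).
\item Unknot return: \(M=3\leq p\leq pc(c+3)/2\), using the hypothesis \(p\geq3\).
\item Two-strand return: \(M=2\) and \(k=1\).
\item Negative recognised return: \(M=p'\leq p\).
\end{itemize}

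\textbf{Main obstacle.} The delicate point is the pair count when padding a recognised positive word that has many blocks. I will bound the number of distinct widths in an ordinary \(T\)-word by its number of letters, and note that padding adds at most two pairs. Then \(k\leq c'+2\leq 3c\) holds for all \(c\geq1\).
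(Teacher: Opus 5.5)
Your proposal is correct and follows essentially the same branch-by-branch argument as the paper. Closure preservation comes from the preprocessing moves, the boundary and padded returns are checked explicitly, the arithmetic branch uses Corollary~\ref{cor:core-correct} and Proposition~\ref{prop:cut-bound} applied to \((p',c')\), and the remaining size checks are \(c'+2\leq3c\) and \(2p'\leq2p\leq pc(c+3)/2\). You leave two immediate items implicit: termination of the preprocessing (each deletion removes at least one letter) and the size check for an unpadded positive return (at most \(c'\) pairs and \(p'\leq p\) strands).
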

\begin{proof}
The preprocessing moves preserve the closure by
Section~\ref{subsec:simplification}. We proved the direct returns and
the padding identity above, and the remaining case follows from
Corollary~\ref{cor:core-correct}. Since each deletion removes at least
one letter, the preprocessing terminates.

For the arithmetic branch, apply Proposition~\ref{prop:cut-bound} to
\((p',c')\). Since \(p'\leq p\) and \(c'\leq c\), its bounds imply
\eqref{eq:main-size}. A direct nonempty return without padding has at
most \(c'\) pairs and at most \(p'\) strands. A padded positive return
has at most \(c'+2\leq3c'\leq3c\) pairs and \(2p'\leq2p\) strands.
Since \(c(c+3)/2\geq2\), this also satisfies the strand bound.
An empty return has two pairs and at most \(\max\{3,2p'\}\leq2p\)
strands, and again satisfies both bounds. This covers every branch.
\end{proof}

\begin{remark}\label{rem:no-search}
One may supplement the algorithm with braid-relation searches or other
certified simplifications. No such search is part of
Algorithm~\ref{alg:complete}, and no search budget is hidden in the
complexity statements below. The direct recognition tests may return a
positive final exponent. In contrast, the arithmetic branch always has
one negative final exponent, with \(-M<s_k\leq-2\).
\end{remark}

\section{Complexity and size of the output}\label{sec:complexity}

In this section we prove the complexity bounds in
Theorem~\ref{thm:main}. We first count the operations needed to compute
the binary parameters. We then calculate the writhe, the expanded
crossing count, and the number of Markov moves used in the proof.

\subsection{Parameter computation}\label{subsec:time}

We start by counting the operations at each stage of the algorithm,
including the direct returns. Throughout this subsection, we use the
model fixed in Section~\ref{subsec:model}.

\begin{proposition}\label{prop:complexity}
For \(p\geq3\), \(c\geq1\), Algorithm~\ref{alg:complete} computes its
binary output in \(O(c)\) integer operations, \(O(cL+L^2)\) bit
operations, and \(O(cL)\) bits of storage. Its binary output has
\(O(cL)\) bits. The arithmetic branch alone has the same bounds.
\end{proposition}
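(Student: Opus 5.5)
We walk through Algorithm~\ref{alg:complete} stage by stage, in the model of Section~\ref{subsec:model}. First we record a uniform magnitude bound: every integer the algorithm stores or emits has absolute value at most a fixed polynomial in \(p(c+1)\), and hence bit length \(O(L)\). Then we count operations per stage. The bound on magnitudes comes from the quantities already controlled earlier. The inputs satisfy \(i_j<p\). The gaps satisfy \(-p\leq\gamma_j<p\) and the residues \(0\leq h_j<p\). The height satisfies \(H\leq pc\) by \eqref{eq:H}, and \(F\leq c\). The partial sums satisfy \(V_j\leq H\), and \(K\leq c\). Every emitted parameter is at most \(M\leq pc(c+3)/2\) by Proposition~\ref{prop:cut-bound}. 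In the cut computation of Section~\ref{subsec:additive-cut}, the values \(S\), \(\ell H\), \(Kx_\ell\), \(z_\ell\), \(b_\ell\) are bounded by \(cH\leq pc^2\). Each of these is at most \((p(c+1))^3\), so it has \(O(L)\) bits.

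Next we count integer operations. The uniform index shift is one minimum and one subtraction per letter. The deletion stage costs \(O(c)\) operations and \(O(c)\) stored integers, by the accounting in Section~\ref{subsec:deletion-data}. The literal recognition tests and the padding or empty returns are linear scans or a constant amount of work. For the arithmetic branch, we use Algorithm~\ref{alg:core} step by step.
\begin{enumerate}
\item Computing the letter data and residues takes a constant number of additions and comparisons per letter. No division is needed.
\item Merging produces \(m\leq 2c\) blocks.
\item The descent scan takes \(O(m)\) operations. The cut selection uses the single division \(H=KQ+A\), then the additive recurrence \eqref{eq:carry} with \(O(K)\) updates, then a constant number of products.
\item Emitting \eqref{eq:direct-formula} takes \(O(m+K)\) additions. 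The cumulative sums \(V_j\) and \(S_\nu\) are updated incrementally, and the indicator term is one comparison.
\end{enumerate}
Summing over all stages gives \(O(c)\) integer operations and \(O(c)\) stored integers.

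For the bit bounds, every operation except the products and the single division acts on \(O(L)\)-bit integers, so each costs \(O(L)\). Over \(O(c)\) such operations this contributes \(O(cL)\). The division and the constant number of products cost \(O(L^2)\) in total. Storage is \(O(c)\) integers of \(O(L)\) bits, so \(O(cL)\) bits. The output has \(k\leq3c\) pairs of \(O(L)\)-bit integers, so it has \(O(cL)\) bits.

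The main obstacle is the bit-cost claim for the deletion stage when \(p\gg c\). There we must check three things. First, the zone arrays have length \(O(c)\) rather than \(p\). Second, indices and the lazy offset \(\lambda\) stay at most \(p\), so each has \(O(L)\) bits. Third, the XOR identifiers fit in \(O(\log c)\subseteq O(L)\) bits. The same care is needed for the cut scan. We must confirm that no step multiplies growing integers; this is exactly why the recurrence \eqref{eq:carry} is used in place of computing \(\ell H\) directly. Together these ensure that only \(O(1)\) operations incur the \(O(L^2)\) cost.
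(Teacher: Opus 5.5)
Your proposal is correct and follows essentially the same route as the paper's proof: a stage-by-stage count giving \(O(c)\) operations on integers of \(O(L)\) bits, with only the single division \(H=KQ+A\) and a constant number of growing-integer products charged \(O(L^2)\). The paper makes two small points more explicit: the first emitted exponent \(pF+u_1\) needs the product \(pF\) (or \(F\) additions of \(p\)), and the direct-return parameters are \(O(c+p)\); neither affects your bounds.
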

\begin{proof}
We first validate the input, shift the minimum index, and simplify the
word using the linked list. These steps take \(O(c)\) operations by
Section~\ref{subsec:deletion-data}. The recognition tests are linear
in the surviving length, and padding adds only a constant number of
arithmetic operations and pairs.

For the arithmetic branch, we compute the quantities in
\eqref{eq:letter-data}--\eqref{eq:residues} in one cyclic scan of the
working letters. We retain the first letter to compute the last gap.
During the same scan, we accumulate \(F,H,n_-\), the input writhe,
and the merged root list. Each letter adds at most two blocks, and
merging the cyclic boundary takes a constant number of list operations.
Thus, the resulting list has at most \(2c\) entries.

In the next scan, we record the descent coordinates and their sum.
The best-cut recurrence uses one division \(H=KQ+A\), followed by
additions and comparisons for each candidate. We perform the cyclic
rotation by changing an index or by copying the list once. Finally,
we compute \eqref{eq:direct-formula} by cumulative additions. Each
internal descent contributes one stored width and at most one extra
output pair, so all these lists have length \(O(c)\).

There are \(O(c)\) additions, comparisons, Boolean operations, and
memory operations. The only growing-integer products or divisions needed
for the parameters are a constant number: \(pF\), the one quotient and
remainder for the cut, and, if requested, evaluation of its final score.
Products by the constants two and three can of course be implemented
by additions. Even evaluating the a priori bounds introduces only a
constant number of further products.

Every root width is at most \(p\), every height is at most \(pc\),
every descent width is at most \(p+H\), and every final exponent in
the arithmetic branch has absolute value at most \(p+H\). Thus
\begin{equation}\label{eq:integer-bounds}
 |s_j|\leq p(c+1),\quad
 r_j\leq\frac{pc(c+3)}2,
 \quad S\leq KH\leq pc^2.
\end{equation}
The other scores and counters have the same polynomial bounds, up to
a fixed power. Their bit lengths are \(O(L)\), since
\(\log_2(pc^2+1)=O(L)\). Direct-return exponents and widths are bounded
by \(O(c+p)\), so the same conclusion holds in those branches.

The simple operations therefore take \(O(cL)\) bit operations, and
the constant number of multiplications and divisions takes
\(O(L^2)\). Storing the \(O(c)\) integers requires \(O(cL)\) bits.
There are \(O(c)\) output pairs, each consisting of \(O(L)\)-bit
integers, and the same bound includes the delimiters. This proves
the proposition.
\end{proof}

\begin{proof}[Proof of Theorem~\ref{thm:main}]
Proposition~\ref{prop:wrapper-size} proves correctness and the size
bounds. Proposition~\ref{prop:complexity} proves the complexity bounds
under the stated encoding. Empty inputs and strand numbers one and two
are handled explicitly in Section~\ref{subsec:padding}. Their costs
are covered uniformly by
\(O((c+1)L+L^2)\) bit operations and \(O((c+1)L)\) bits of storage,
with \(L\) defined as in \eqref{eq:L} for the given input.
\end{proof}

For the stated routine the number of integer operations is
\(\Theta(c)\) on nonempty inputs, since it always inspects all letters.
The bit bound is an upper bound; an \(\Omega(cL)\) lower bound is not
asserted for every individual input. For fixed \(p\), it becomes
\(O(c\log(c+1))\). Decimal serialisation and expansion of powers into
Artin letters are separate output tasks.

\subsection{Expanded crossings and Markov moves}\label{subsec:expanded}

We now calculate the writhe and crossing count of the braid given by
the parameters. The following identities allow us to compute both
quantities without a separate product for each output pair.

\begin{proposition}\label{prop:writhe}
Let \(\mathcal P\) be the output of Algorithm~\ref{alg:core} on the
input \((p,\beta)\), let \(M\) be its final strand parameter, and let
\(w=\sum_j\epsilon_j\). Then
\begin{equation}\label{eq:writhe}
 w(\mathcal P)=w-(M-p).
\end{equation}
Writing its last pair as \((M,s_k)\),
\begin{equation}\label{eq:crossing-shortcut}
 C(\mathcal P)=w-(M-p)-2(M-1)s_k.
\end{equation}
In particular,
\begin{equation}\label{eq:expanded-bound}
 C(\mathcal P)=O(p^2c^3).
\end{equation}
\end{proposition}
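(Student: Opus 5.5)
The plan is to track the writhe through the chain of moves relating the input braid to the output braid, rather than summing the parameter formula directly. Braid equalities in a fixed group preserve the exponent sum, because the Artin relations are homogeneous. Conjugations preserve it as well. A negative Markov destabilisation removes exactly one letter \(\sigma^{-1}\), so it raises the exponent sum by one. By Theorem~\ref{thm:formula}, the output braid is related to a conjugate of the input by braid equalities, cyclic conjugations and exactly \(M-p\) negative destabilisations. The step from \(\beta\) to \(E_p^{-a_1}\beta E_p^{a_1}\) is a conjugation, and the rewriting in \eqref{eq:cyclic-equality}, \eqref{eq:U} and the merging of blocks are braid equalities or conjugations. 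Reading the chain from output to input therefore gives \(w(\mathcal P)+(M-p)=w\), which is \eqref{eq:writhe}.

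One could instead verify \eqref{eq:writhe} directly from \eqref{eq:direct-formula} by telescoping. I would mention this only as a check: \(\sum (r_j-1)s_j\) splits into contributions from the first prefix application and the second one, each telescoping to \(-\)(strands removed). The invariance argument is cleaner and is the one I would write.

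For \eqref{eq:crossing-shortcut}, note that in the arithmetic branch every exponent except the last is positive, and \(s_k<0\) by Theorem~\ref{thm:formula}. Hence
\[
 C(\mathcal P)=\sum_{j<k}(r_j-1)s_j-(M-1)s_k=w(\mathcal P)-2(M-1)s_k,
\]
and substituting \eqref{eq:writhe} gives the formula.

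For \eqref{eq:expanded-bound}, I use the inequalities \(|w|\leq c\), \(M\leq pc(c+3)/2=O(pc^2)\) from Proposition~\ref{prop:cut-bound}, and \(|s_k|\leq p(c+1)\) from \eqref{eq:integer-bounds}. Since \(|s_k|\) is either \(u_m\leq p\) or \(r_\kappa\leq p+H\leq p(c+1)\), the product \((M-1)|s_k|\) is \(O(pc^2\cdot pc)=O(p^2c^3)\), and the other terms are smaller.

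The main point needing care is the invariance step. I must confirm that every move in the proofs of Lemma~\ref{lem:one-strand}, Proposition~\ref{prop:prefix} and Theorem~\ref{thm:formula} is one of the three writhe-neutral types or a negative destabilisation. The step in \eqref{eq:separation} replaces \(\delta^{-z}\) by \(E^{r}\delta^{r-z}\), which is a braid equality, since \(E_r^r=\delta_r^{-r}\) and both sides have exponent sum \(-z(r-1)\). Finally, I need the destabilisation count to be exactly \(M-p\), which is the last assertion of Theorem~\ref{thm:formula}.
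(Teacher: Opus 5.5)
Your proposal is correct and follows essentially the same route as the paper: the exponent sum is preserved by braid equalities and conjugations and rises by one at each of the exactly \(M-p\) negative destabilisations given by Theorem~\ref{thm:formula}. The crossing formula then follows because only \(s_k\) is negative, and the bound follows from \(|w|\leq c\), \(|s_k|\leq p(c+1)\) and \(M=O(pc^2)\).
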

\begin{proof}
Theorem~\ref{thm:formula} reduces the output to a conjugate of the input
by exactly \(M-p\) negative destabilisations. A negative
destabilisation deletes one negative crossing and increases writhe by
one. Braid equality and conjugation preserve writhe. This proves
\eqref{eq:writhe}.

Only the last output exponent is negative. Replacing its signed
contribution \((M-1)s_k\) by its absolute value changes the total by
\(-2(M-1)s_k\). This proves \eqref{eq:crossing-shortcut}.
Since \(|w|\leq c\), \(|s_k|\leq p+H\leq p(c+1)\), and
\(M=O(pc^2)\), the claimed bound follows.
\end{proof}

The same crossing bound holds for the complete algorithm when
\(p\geq3,c\geq1\). In a direct return without padding the expanded
length is at most \(c\). Padding adds
\[
 2p'(p'-1)+p'(2p'-1)=4(p')^2-3p'
\]
letters to its working word, and empty returns have \(O(p^2)\)
letters. The arithmetic branch is bounded by
Proposition~\ref{prop:writhe} with \(p'\leq p,c'\leq c\).

We can also compute these statistics within the same parameter bound.
For any valid output whose only possibly negative exponent is the last,
\begin{equation}\label{eq:general-crossing-shortcut}
 C(\mathcal P)=
 \begin{cases}
 w(\mathcal P)-2(M-1)s_k,&s_k<0,\\
 w(\mathcal P),&s_k>0.
 \end{cases}
\end{equation}
For the arithmetic branch, use \eqref{eq:writhe}; a literal direct
return preserves the working writhe, and padding lowers it by \(p'\).
The unlink and unknot formulas have writhe \(-q\) and \(-2\),
respectively. Thus computing these statistics requires only a constant
number of additional growing-integer products after the input scan.

The proof provides a compressed description of a Markov certificate.
For the arithmetic branch it has \(M-p\) negative destabilisations,
which is \(O(pc^2)\), rather than \(O(c)\) in general. Writing every
intermediate braid, or expanding every braid equality used in the
certificate, is a different computation. We make no linear-time claim
for that task.

\subsection{Sharpness for the construction}\label{subsec:sharpness}

We now show that the arithmetic construction attains both size bounds
and give an example of quadratic strand growth for the complete
algorithm. These examples also distinguish the expanded braid length
from the time needed to compute its parameters.

\begin{proposition}\label{prop:sharp-core}
For every \(p\geq3\) and \(c\geq1\), the arithmetic algorithm on
\(\beta=\sigma_1^c\in\Br_p\) has
\[
 k=3c,\qquad M=\frac{pc(c+3)}2.
\]
Thus both uniform bounds are sharp for Algorithm~\ref{alg:core}.
\end{proposition}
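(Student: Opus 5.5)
The plan is to run Algorithm~\ref{alg:core} by hand on \(\beta=\sigma_1^c\in\Br_p\) and read off \(k\) and \(M\) from \eqref{eq:exact-size} and \eqref{eq:cut-score}. The first step is the letter data \eqref{eq:letter-data}. Every letter has \(\epsilon_j=1\) and \(i_j=1\), so \(b_j=0\), \(a_j=1\), \(t_j=1\) and \(q_j=p-1\). Since \(a_{c+1}=a_1=1\), every cyclic gap in \eqref{eq:cyclic-gaps} is \(\gamma_j=1-1-1=-1<0\). Hence \eqref{eq:residues} gives \(h_j=p-1\) for every \(j\), and \(F=c\).

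The second step is to identify the cyclic list. From \eqref{eq:U} we get \(U=(E_{p-1}E_p^{p-1})^c\), so the block sequence is \((p-1,1),(p,p-1)\) repeated \(c\) times. No height is zero, because \(p-1\geq2\). Consecutive widths alternate between \(p-1\) and \(p\), and this includes the closing pair across the cyclic boundary, so no merging occurs. Thus \(m=2c>1\), and \(H=c(1+(p-1))=pc\), in agreement with \(H=pF+n_-\) from \eqref{eq:H}. The descents are exactly the \(c\) transitions from \(p\) to \(p-1\), so \(K=c\). Formula \eqref{eq:exact-size} then gives \(k=m+K=3c\).

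The third step is the strand parameter. The \(c\) descent locations sit at height coordinates spaced exactly \(p\) apart around the circle of height \(H=pc\). Consequently, from any descent cut, the other descents lie at clockwise distances \(p,2p,\ldots,(c-1)p\). By \eqref{eq:cut-score}, every descent cut gives
\[
 M=cp+pc+p\frac{c(c-1)}{2}=\frac{pc(c+3)}2 .
\]
So the minimising cut chosen in step~(3) of the algorithm, and equally any tie-broken choice, attains exactly this value. The same value can be cross-checked against \eqref{eq:cut-coordinate}, since \(x_\ell=\ell p\) makes \(\ell H-Kx_\ell=0\) for every \(\ell\). Comparing with Proposition~\ref{prop:cut-bound} shows both bounds are attained, which proves sharpness.

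The main point needing care is the cyclic bookkeeping rather than any calculation. One must confirm that the closing pair \((p,p-1)\to(p-1,1)\) is a genuine cyclic descent and that no merge happens across the boundary. One must also confirm that \(p=3\) causes no collapse: there the inner width is \(2\neq3\). Finally, the statement concerns Algorithm~\ref{alg:core} itself, not Algorithm~\ref{alg:complete}. The preprocessing in Algorithm~\ref{alg:complete} would simplify this word, so it must not be invoked here.
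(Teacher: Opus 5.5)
Your proposal is correct and follows the paper's proof. You use the same letter data with \(\gamma_j=-1\), \(h_j=p-1\) and \(F=c\), the same root list \(((p-1,1),(p,p-1))\) repeated \(c\) times with \(m=2c\), \(K=c\), \(H=pc\), and the same observation that equally spaced descents make every cut tie. The only cosmetic difference is that you sum the clockwise distances in \eqref{eq:cut-score}, whereas the paper sums the internal descent widths \(2p,\ldots,cp\) in \eqref{eq:exact-size}; this is the same computation.
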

\begin{proof}
Every letter has \(a_j=t_j=1\), so
\(\gamma_j=-1\), \(h_j=p-1\), and \(F=c\). The cyclic roots are
\[
 ((p-1,1),(p,p-1))
\]
repeated \(c\) times. Hence \(m=2c\), \(K=c\), and \(H=pc\).
All descent locations are equally spaced by height \(p\), so every
eligible cut gives the same score. After one is chosen, the internal
descent widths are
\(2p,3p,\ldots,cp\). Therefore
\begin{align*}
 M&=p+pc+p(2+3+\cdots+c)\\
  &=\frac{pc(c+3)}2,
 \qquad k=m+K=3c.
\end{align*}
For \(c=1\), the descent list is empty and the same formula gives
\(M=2p\).
\end{proof}

The complete algorithm recognises these positive powers, or destabilises
them when a unique endpoint occurs. Thus Proposition~\ref{prop:sharp-core}
does not establish sharpness of the numerical strand bound for the
complete algorithm.

The complete algorithm can still produce quadratically many strands
after all the stated simplifications. Consider
\begin{equation}\label{eq:quadratic-family}
 \beta=(\sigma_1\sigma_3)^n\in\Br_4,
 \qquad n\geq2,\qquad c=2n.
\end{equation}
There is no inverse pair, and each endpoint generator occurs \(n\)
times. The literal positive test fails: every proposed block beginning
with \(1\) immediately meets \(3\) instead of \(2\).
For an odd-position letter the gap is \(1\); for an even-position
letter it is \(-3\). Both residues are \(1\). The roots are consequently
\((3,1),(4,1)\), repeated \(c\) times. Thus
\[
 H=2c,\quad m=2c,\quad K=c,\quad
 r_\nu=4+2\nu\quad(1\leq\nu<c).
\]
The resulting exact sizes are
\begin{equation}\label{eq:quadratic-size}
 k=3c,\qquad
 M=4+2c+\sum_{\nu=1}^{c-1}(4+2\nu)=c^2+5c.
\end{equation}
This proves quadratic worst-case strand growth for this complete
routine at fixed \(p\). It is not a lower bound for all conversion
algorithms. In fact, the input link is the split union of two torus
links, and its special structure suggests other possible simplifications.

\section{Examples}\label{sec:examples}

In this section we apply the algorithm to several explicit braid
words. We show the cyclic residue calculation and both uses of the
prefix formula, and give examples where direct recognition shortens
the output.

\subsection{The figure-eight knot}\label{subsec:figure-eight}

We start with the figure-eight knot, given by the standard
three-strand word
\[
 \beta=(\sigma_1\sigma_2^{-1})^2.
\]
The four \(a_j\)'s are all one, while
\((t_1,t_2,t_3,t_4)=(1,0,1,0)\). Hence
\[
 (\gamma_j)=(-1,0,-1,0),\qquad
 (h_j)=(2,0,2,0),\qquad F=2.
\]
Both signs have inner width two. After merging, the cyclic root list is
\[
 ((2,2),(3,2),(2,2),(3,2)),
 \qquad H=8,\quad K=2.
\]
Both descent cuts give the same minimum. We choose the displayed
one and apply the prefix formula to obtain
\[
 \delta_3^8\delta_5\delta_7^{-1}\delta_9\delta_{11}^{-3}.
\]
There is one internal descent, of width \(r_1=7\).
Using \(\delta_7^{-1}=E_7^7\delta_7^6\), the second application gives
\[
 \delta_3^8\delta_5\delta_7^6\delta_9\delta_{11}^4\delta_{18}^{-7}.
\]
Consequently
\begin{equation}\label{eq:figure-eight-parameters}
 \cl\beta\iso
 T((3,8),(5,1),(7,6),(9,1),(11,4),(18,-7)).
\end{equation}
The output has six pairs and eighteen strands. Its reduction uses
\(18-3=15\) negative destabilisations. Since the input writhe is zero,
\[
 w(\mathcal P)=-15,\qquad
 C(\mathcal P)=-15+2\cdot17\cdot7=223.
\]
This computation gives an explicit presentation; we do not claim that
it is minimal.

\subsection{A destabilisation and an unused strand}
\label{subsec:small-shortcuts}

We next illustrate a destabilisation and the need to retain an unused
strand. First, take
\(\sigma_1^3\sigma_2^{-1}\in\Br_3\). Its unique occurrence of the
right endpoint generator admits a negative destabilisation. The
remaining two-strand word is \(\sigma_1^3\), so the complete routine
returns \(T((2,3))\).

Now take \(\sigma_1^4\in\Br_3\). This has an unused third strand,
whose closure is a split unknot. The word is recognised as a positive
\(T\)-word, but its largest width is two, smaller than the declared
strand count. Padding gives
\begin{equation}\label{eq:unused-example}
 T((2,4),(3,6),(6,-3)).
\end{equation}
It has three components: the two components of \(T(2,4)\), together
with the unused-strand unknot. Its expanded length is
\(4+12+15=31\). Returning \(T((2,4))\) alone would lose a component
and would not solve the stated input problem.

\subsection{A six-strand example}\label{subsec:six-strand}

We now apply the algorithm to a positive word with several blocks
which is not already written as a \(T\)-word:
\begin{equation}\label{eq:six-input}
 \beta=(\sigma_1\sigma_2\sigma_3\sigma_4\sigma_5)
       (\sigma_2\sigma_3\sigma_4)
       (\sigma_1\sigma_2\sigma_4\sigma_5)\in\Br_6.
\end{equation}
There are \(c=12\) letters. The cyclic gaps and residues are
\[
 \begin{aligned}
 (\gamma_j)&=(0,0,0,0,-4,0,0,-4,0,1,0,-5),\\
 (h_j)&=(0,0,0,0,2,0,0,2,0,1,0,1).
 \end{aligned}
\]
Thus \(F=3\) and \(H=18\). One stored root list is
\[
 ((5,5),(6,2),(5,3),(6,2),(5,2),(6,1),(5,2),(6,1)).
\]
Its descent coordinates are \(0,7,12,15\), so \(K=4\) and
\(S=34\). Equation~\eqref{eq:cut-coordinate} gives strand scores
\[
 76,\quad66,\quad64,\quad70.
\]
The minimum starts at height twelve. The rotated root list is
\[
 ((5,2),(6,1),(5,2),(6,1),(5,5),(6,2),(5,3),(6,2)).
\]
The internal descent widths are \(9,12,19\), and \(R=24\).
The direct formula gives
\begin{equation}\label{eq:six-output}
\begin{split}
 T\bigl(&(6,23),(8,1),(9,8),(11,1),(12,11),(17,1),\\
        &(19,18),(22,1),(24,3),(33,3),(45,7),(64,-19)\bigr).
\end{split}
\end{equation}
There are twelve pairs and sixty-four strands. From
\eqref{eq:writhe}--\eqref{eq:crossing-shortcut},
\[
 w(\mathcal P)=12-(64-6)=-46,\qquad
 C(\mathcal P)=-46+2\cdot63\cdot19=2348.
\]
We compute the twelve parameter pairs directly, without writing
these 2348 letters.

\section{Starting from a link diagram}\label{sec:diagrams}

In this section we start with a link diagram and first put it in braid
form. We then apply Algorithm~\ref{alg:complete}. We specify the
diagram encoding, recall Yamada--Vogel braiding, and bound the cost of
finding and performing its moves and reading the final braid word.
This includes the work needed between moves, in addition to their
number.

\subsection{Diagram size and Yamada--Vogel braiding}
\label{subsec:vogel}

We first describe the input and recall the braiding theorem. The input
is an oriented combinatorial diagram on the sphere, given by a plane
rotation system with crossing information and component orientations.
We also record the placement of disconnected projection components
and any crossing-free circles. Labels have
\(O(\log(N+1))\) bits. We do not include the problem of recognising
crossings from a raster image or manipulating geometric coordinates.
Let
\begin{equation}\label{eq:N}
 n=c(D),\qquad \mu=\#\pi_0(L(D)),\qquad N=n+\mu,
\end{equation}
where \(n\) counts crossings of the supplied diagram and \(\mu\)
counts components of the represented link \(L(D)\). For a nonempty
link \(N\geq1\).

Oriented smoothing gives \(s\) Seifert circles, with
\(s\leq n+\mu\): each smoothing changes the number of curve components
by at most one. For two circles on the sphere, their complementary
annulus determines whether their orientations agree in its first
homology. Call them coherent if they agree and incoherent otherwise.
Let \(h(D)\) be the number of unordered incoherent pairs. In particular,
\begin{equation}\label{eq:h-bound}
 0\leq h(D)\leq\binom{s}{2}.
\end{equation}

We use the following standard form of the Yamada--Vogel theorem
\cite{Yamada,Vogel}; its reducing-arc proof is also given in
\cite[Section 2.2]{BirmanBrendle}. If \(h(D)>0\), there is an arc
joining an incoherent pair whose interior avoids the Seifert picture
(the circles together with the signed arcs recording crossings).
A Reidemeister II move along it preserves \(s\), introduces two
crossings, and lowers \(h\) by one. Repetition ends at height zero,
where a braid on \(s\) strands can be read with one Artin letter per
remaining crossing. It follows that, for this diagram, the resulting
braid length is at most
\begin{equation}\label{eq:vogel-length}
 n+2h(D)\leq n+s(s-1).
\end{equation}
For our estimates, \eqref{eq:h-bound} is sufficient. We therefore do
not need a sharper bound with an additional connectedness convention.

\subsection{A polynomial implementation bound}\label{subsec:vogel-time}

We now bound the time needed to find reducing arcs and to read the
final braid word from the combinatorial diagram. We allow the data to
be reconstructed after each move. This gives a direct polynomial
bound, without asserting that this implementation of the classical
braiding algorithm is optimal.

First make the projection connected if necessary. Let \(t\) be its
number of connected components, including crossing-free circles.
Then \(t\leq\mu\). A face incident to two distinct projection
components admits a Reidemeister II finger move between them. Such a
face exists whenever the projection is disconnected: on the sphere,
the regions and projection components have a connected incidence
structure. The move creates two crossings, connects those two
projection components, and preserves the oriented link. Repeat at most
\(t-1\) times. Face-boundary and component traversals find the next
pair in polynomial time, or in \(O(N)\) work per step by rebuilding
their lists. Thus this preliminary stage takes \(O(N^2)\) word
operations and gives a connected diagram \(D_0\) with
\begin{equation}\label{eq:connected-diagram}
 n_0\leq n+2(\mu-1),\qquad
 s_0\leq n_0+\mu\leq3N.
\end{equation}
The single crossing-free unknot is handled directly, without this stage.

Suppose the current connected diagram has \(q\) crossings. Store its
oriented half-edges, the cyclic order at each crossing, and the crossing
signs. Smooth each crossing combinatorially while retaining a marked
connecting arc. Traversing the smoothed curves and the face boundaries
of this Seifert picture requires \(O(q+s_0)\) word operations. The
picture has \(O(q+s_0)\) edges and half-edges; connectedness ensures
that its face boundaries are available from its rotation system.

We find a reducing arc by the following linear scan. On each incidence
of an oriented circle segment with a face, record its circle identifier
and whether the face lies to the left or to the right of that oriented
segment. For each face and each of these two sides, retain the first
circle identifier seen. If another incidence on that side has a
different identifier, the two circles are incoherent and can be joined
inside the face. Indeed, they lie on the boundary of their common
annular side, and equal side designations give opposite homology
orientations in that annulus. Conversely, an incoherent pair exposed
in one face has equal side designations. The reducing-arc theorem
therefore guarantees that this scan finds a pair whenever \(h>0\).
The incidences are visited once, so the scan costs \(O(q+s_0)\).
Inserting the two-crossing move is an update of the half-edge rotation
system; rebuilding and relabelling the whole diagram is allowed within
the same bound. No coordinates for the interior arc are required.

The number of reducing moves is at most \(\binom{s_0}{2}=O(N^2)\).
Throughout the computation,
\begin{equation}\label{eq:q-bound}
 q\leq n_0+s_0(s_0-1)=O(N^2).
\end{equation}
Even allowing \(O(q+s_0^2)=O(N^2)\) work per reconstruction, the
total reducing stage therefore takes \(O(N^4)\) word operations.
We do not need to list all incoherent pairs or recompute \(h\) during
the scan. The existence of a reducing arc and the strict decrease of
height prove termination.

At height zero, read the Artin word from the coherent Seifert picture.
One combinatorial method orders the circles from inside to outside,
choosing the point at infinity suitably, and joins their cyclic crossing
lists successively. A crossing between consecutive circles numbered
\(i,i+1\) is replaced by \(\sigma_i\) or \(\sigma_i^{-1}\) according
to its sign. Lists between consecutive common crossings are inserted
as the child circle is joined to its parent. This is the circle-list
extraction described in \cite[Section 4.8]{Bartholomew}.
There are \(q\) crossing symbols and at most \(s_0\) circle lists.
Each crossing is emitted once. Even if each replacement or splice
scans or copies a whole current list, its length is \(O(q+s_0)\),
and there are \(O(q+s_0)\) such replacements and joins. This gives
the conservative extraction bound \(O((q+s_0)^2)=O(N^4)\).
It returns a braid with
\begin{equation}\label{eq:braid-from-diagram}
 p=s_0=O(N),\qquad c\leq n_0+s_0(s_0-1)=O(N^2).
\end{equation}
All labels remain polynomially bounded in \(N\). Consequently each
word operation above has \(O(\log(N+1))\) bit cost in the model of
Section~\ref{subsec:model}. No optional braid simplification is needed
in this braiding stage.

\begin{theorem}\label{thm:diagram}
From a combinatorial diagram \(D\) of a nonempty oriented link, with
\(n\) crossings and \(\mu\) components, one can compute an offset-zero
generalised \(T\)-link presentation of the same oriented link in
\[
 O(N^4)\text{ word operations},\qquad
 O(N^4\log(N+1))\text{ bit operations},\qquad N=n+\mu.
\]
The output has \(O(N^2)\) pairs, \(O(N^5)\) strands, and
\(O(N^2\log(N+1))\) bits of binary parameters. The parameter-conversion
stage after braiding takes \(O(N^2\log(N+1))\) bit operations.
\end{theorem}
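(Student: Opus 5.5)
The proof composes the two stages already analysed: the Yamada--Vogel braiding stage of Section~\ref{subsec:vogel-time} and Algorithm~\ref{alg:complete}. Then I read off the sizes from Theorem~\ref{thm:main} with \(p=O(N)\) and \(c=O(N^2)\). Correctness follows because each stage preserves the oriented link. The connecting finger moves and the reducing Reidemeister II moves are ambient isotopies. The circle-list extraction produces a braid whose closure is the coherent diagram. Finally, Proposition~\ref{prop:wrapper-size} shows that Algorithm~\ref{alg:complete} preserves the oriented closure and returns an offset-zero presentation.

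\textbf{Braiding cost.} The connecting stage takes \(O(N^2)\) word operations. The reducing stage performs at most \(\binom{s_0}{2}=O(N^2)\) moves, each costing \(O(N^2)\) by \eqref{eq:q-bound}, for \(O(N^4)\) in total. The extraction costs \(O(N^4)\). Every label has \(O(\log(N+1))\) bits, so this stage uses \(O(N^4\log(N+1))\) bit operations.

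\textbf{Degenerate inputs.} A single crossing-free unknot is returned directly as \eqref{eq:unknot}. More generally, if \(p<3\) or the extracted word is empty, Theorem~\ref{thm:main} and Section~\ref{subsec:padding} give outputs of size \(O(p^2)=O(N^2)\) at negligible cost. These cases are covered trivially.

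\textbf{Conversion cost and output size.} In the main case, \eqref{eq:braid-from-diagram} gives \(p\le 3N\) and \(c=O(N^2)\). Theorem~\ref{thm:main} then yields \(k\le 3c=O(N^2)\) pairs and
\[
M\le pc(c+3)/2=O(N\cdot N^4)=O(N^5)
\]
strands. In \eqref{eq:L}, \(L=\lceil\log_2(2+p(c+1))\rceil=O(\log(N+1))\). Hence the conversion takes \(O(cL+L^2)=O(N^2\log(N+1))\) bit operations, and the output has \(O(cL)=O(N^2\log(N+1))\) bits by Proposition~\ref{prop:complexity}. Adding the two stages gives the total bounds, since \(N^2\log(N+1)\) is dominated by \(N^4\log(N+1)\). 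Likewise the \(O(c)=O(N^2)\) integer operations of the conversion, each on \(O(L)\)-bit integers, fit within the \(O(N^4)\) word-operation count.

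\textbf{Main obstacle.} The only delicate point is the interface between the models. The braiding stage is counted in word operations on \(O(\log(N+1))\)-bit labels. Theorem~\ref{thm:main} is stated in the bit-cost random-access model with parameter \(L\). I would therefore check that passing the braid word costs \(O(c\log(N+1))\) bits, and that the word length \(O(L)\) equals \(O(\log(N+1))\), so that both cost accounts are consistent.
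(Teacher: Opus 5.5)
Your proposal is correct and follows essentially the same route as the paper. You compose the Yamada--Vogel stage, which gives \(p=s_0\le 3N\) and \(c=O(N^2)\), with Algorithm~\ref{alg:complete}, read \(k\), \(M\), \(L\) and the bit costs off Theorem~\ref{thm:main} and Proposition~\ref{prop:complexity}, and handle the boundary returns separately. One minor imprecision: in the two-strand branch the expanded length of \(T((2,e))\) is \(|e|\le c\), not \(O(p^2)\). It is still \(O(N^2)\), and the quantities the theorem actually bounds (pairs, strands, bits, operations) are trivially within range there.
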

\begin{proof}
By the preceding analysis, Yamada--Vogel braiding has the stated word
and bit bounds and gives \eqref{eq:braid-from-diagram}. We apply
Theorem~\ref{thm:main} to this braid, using the explicit boundary
returns when \(p<3\) or \(c=0\). In the nonempty arithmetic case,
we have
\[
 L=O(\log(N+1)),\quad
 k\leq3c=O(N^2),\quad
 M\leq pc(c+3)/2=O(N^5).
\]
Proposition~\ref{prop:complexity} gives the parameter-stage bit cost
and output size. The elementary boundary cases satisfy these
asymptotic bounds as well. Both stages preserve the oriented link,
so their composition proves the theorem.
\end{proof}

For knots, \(\mu=1\), and the theorem gives a bound polynomial in
\(n+1\). For general links, \(n\) alone cannot bound the size of the
input or output: consider the \(\mu\)-component unlink with a
crossing-free diagram. Also, the \(n\) in the theorem is the number
of crossings of the supplied diagram. Replacing it by the minimum
crossing number of the link would require a separate minimisation
problem, which is not part of the conversion.

\begin{remark}\label{rem:diagram-scope}
The fourth-degree word bound is a conservative bound for the
reconstruction scheme just described. It is not an assertion of
optimality for Yamada--Vogel braiding, and it is not obtained merely
by counting reducing moves. The conversion in
Sections~\ref{sec:cyclic}--\ref{sec:shortcuts} takes an Artin braid word
as input. The diagram stage in this section composes that conversion
with the classical braiding algorithm.
\end{remark}

\section{Polynomial-time transfer for link properties and invariants}
\label{sec:transfer}

In this section we use the conversion to relate computations on link
diagrams to computations on generalised \(T\)-link parameters. We
consider properties with a yes-or-no answer and invariants with a
specified output encoding. We first compare the sizes of the two
descriptions and prove that they can be translated in polynomial time.
We then prove the transfer statements for properties, finitely encoded
invariants, and real-valued invariants with a prescribed error. The
algorithms are deterministic and uniform throughout; in particular,
they must work for arbitrarily many pairs. We use the standard
principle of polynomial reductions; see, for example,
\cite[Chapter 2]{AroraBarak}. The output bounds proved above allow us
to apply this principle here.

We consider polynomial-time solvability here. A standard multitape
machine can simulate the bit-cost random-access computation with
polynomial overhead by storing the visited cells in a list and
scanning it for each access. Polynomial bit cost bounds the number
of cells and the lengths of their addresses and contents, so the
simulation remains polynomial. The precise conversion bounds use
the model of Section~\ref{subsec:model}.

\subsection{Parameter size and polynomial translations}
\label{subsec:transfer-sizes}

We start by specifying the sizes used in the transfer. Our algorithm
computes a binary parameter list, while the reverse construction
writes its Artin expansion to obtain a diagram. We keep track of both
sizes before comparing the corresponding problems.

Let \(\mathscr G\) be the set of all admissible presentations in
Definition~\ref{def:gt}. For \(\mathcal P\in\mathscr G\), write
\(M=r_k\), and retain the expanded length
\(C(\mathcal P)\) from \eqref{eq:metrics-definition}. Define
\begin{equation}\label{eq:transfer-sizes}
 W(\mathcal P)=M+C(\mathcal P),\qquad
 U(\mathcal P)=k+\sum_{j=1}^k(r_j+|s_j|).
\end{equation}
Thus \(U\), up to constant factors, is the length of the list when
the positive integers are written in unary, with separate signs and
delimiters. The binary length, denoted by \(\ell(\mathcal P)\), satisfies
\begin{equation}\label{eq:transfer-binary-size}
 \ell(\mathcal P)=\Theta\!\left(
 k+\sum_{j=1}^k\left(
 \left\lceil\log_2(r_j+1)\right\rceil+
 \left\lceil\log_2(|s_j|+1)\right\rceil\right)\right)
\end{equation}
for a fixed self-delimiting encoding of integer lists.

Every exponent is nonzero and every width is at least two. Consequently
\[
 k\leq C(\mathcal P),\qquad
 \sum_j|s_j|\leq C(\mathcal P),\qquad
 \sum_jr_j\leq2C(\mathcal P).
\]
For the last inequality, use
\(r_j\leq2(r_j-1)|s_j|\) term by term. In the other direction,
\[
 C(\mathcal P)
 \leq M\sum_j|s_j|\leq U(\mathcal P)^2.
\]
It follows that
\begin{equation}\label{eq:transfer-size-comparison}
 U(\mathcal P)\leq4W(\mathcal P),\qquad
 W(\mathcal P)\leq U(\mathcal P)+U(\mathcal P)^2.
\end{equation}
Polynomial time in \(W\) is therefore equivalent to ordinary
polynomial time on unary parameter lists. We call this the
\emph{expanded-size parameter problem}. This convention will be used
for each equivalence below. The stronger requirement of polynomial
time in the binary length \(\ell\) is treated separately in
Section~\ref{subsec:binary-classifications}.

Let \(\mathscr D\) be the combinatorial diagrams specified in
Section~\ref{sec:diagrams}. They describe nonempty oriented links. We
measure a diagram \(D\) by
\(N(D)=c(D)+\mu(D)\). An explicit encoding has polynomially related
bit length, since its labels use \(O(\log(N+1))\) bits and every
crossing-free component is recorded individually. Thus polynomial time
in \(N\) is also polynomial time in the ordinary encoded input size.
Statements about \(\mathscr D\) and \(\mathscr G\) take valid
descriptions as inputs.

\begin{proposition}\label{prop:translations}
There are deterministic maps
\[
 F:\mathscr D\longrightarrow\mathscr G,
 \qquad E:\mathscr G\longrightarrow\mathscr D
\]
with
\begin{equation}\label{eq:translation-types}
 T(F(D))\iso L(D),\qquad L(E(\mathcal P))\iso T(\mathcal P).
\end{equation}
The binary list \(F(D)\) is computed in
\(O(N^4\log(N+1))\) bit operations and satisfies
\begin{equation}\label{eq:translation-output}
 \ell(F(D))=O(N^2\log(N+1)),\qquad
 W(F(D))=O(N^8).
\end{equation}
The map \(E\) can be computed in
\(O(W\log(W+1))\) bit operations, with
\(N(E(\mathcal P))=O(W)\), where \(W=W(\mathcal P)\).
In particular, diagrams and unary parameter lists admit polynomial-time
translations in both directions.
\end{proposition}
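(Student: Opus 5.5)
Define \(F\) as the composition already analysed: Yamada--Vogel braiding from Section~\ref{sec:diagrams} followed by Algorithm~\ref{alg:complete}. Theorem~\ref{thm:diagram} then gives \(T(F(D))\iso L(D)\), the bit cost \(O(N^4\log(N+1))\), and \(\ell(F(D))=O(N^2\log(N+1))\), since the output has \(O(N^2)\) pairs of \(O(\log(N+1))\)-bit integers. For \(W(F(D))\) I will use \eqref{eq:braid-from-diagram}, i.e.\ \(p=O(N)\) and \(c=O(N^2)\).

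\begin{itemize}
\item In the arithmetic branch, \eqref{eq:main-size} gives \(M=O(pc^2)=O(N^5)\), and \eqref{eq:expanded-bound} gives \(C(\mathcal P)=O(p^2c^3)=O(N^2\cdot N^6)=O(N^8)\).
\item In the direct, padded and empty returns, the expanded length is \(O(c+p^2)\), which is smaller, as noted after Proposition~\ref{prop:writhe}.
\end{itemize}

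Hence \(W=M+C=O(N^8)\).

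Define \(E\) by expanding each pair \((r_j,s_j)\) into \(|s_j|\) copies of the word \(\delta_{r_j}^{\pm1}\), that is, \((r_j-1)|s_j|\) letters in total. This gives an Artin word of length \(C(\mathcal P)\) on \(M\) strands. Then build the standard closed-braid diagram: \(M\) concentric strands, one crossing per letter, with orientations and crossing signs read off from the exponent signs. This is a diagram of \(T(\mathcal P)\) by Definition~\ref{def:gt}, so \(L(E(\mathcal P))\iso T(\mathcal P)\).

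\begin{itemize}
\item The number of crossings is \(C(\mathcal P)\).
\item The number of components is at most \(M\), since each component of a closed braid contains at least one strand.
\item Hence \(N(E(\mathcal P))\le C+M=W\).
\end{itemize}

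For the running time, I will write the rotation system in one sweep. There is one crossing record per letter, with half-edges linked to the previous occurrence on each of the two strands involved. I track the strand positions through the permutation as the sweep proceeds, and close up by linking the last half-edge on each position to the first. Strands that never cross are recorded as crossing-free circles. This gives \(O(W)\) records, each with labels of \(O(\log(W+1))\) bits, so the total is \(O(W\log(W+1))\) bit operations in the bit-cost random-access model. Reading the binary input costs \(O(\ell)\), which is at most \(O(W)\).

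The main obstacle is handling the bookkeeping of \(E\) carefully.
\begin{itemize}
\item Strands untouched by any crossing cannot occur when \(r_1\ge2\) and every \(s_j\neq0\), because strand positions \(1,\dots,r_k\) all meet \(\delta_{r_k}\). I still need to check this, and otherwise handle such strands via the crossing-free-circle convention.
\item The number of components must be computed, for example by following the permutation cycles in \(O(M)\) steps.
\item The diagram must be produced in exactly the encoding fixed in Section~\ref{sec:diagrams}, including connectivity on the sphere. This holds because the closed braid diagram is connected whenever every strand crosses.
\end{itemize}

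For the final sentence of the statement, \(F\) is polynomial in \(N\) by the bounds above. \(E\) is polynomial in \(W\), and \(W\) is polynomially related to the unary size \(U\) by \eqref{eq:transfer-size-comparison}.
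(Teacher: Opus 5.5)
Your proposal is correct and follows the paper's proof essentially step for step. You take \(F\) to be Yamada--Vogel braiding followed by Algorithm~\ref{alg:complete}, with \(W(F(D))=O(N^8)\) coming from \eqref{eq:braid-from-diagram}, \eqref{eq:main-size} and \eqref{eq:expanded-bound} together with the \(O(c+p^2)\) boundary returns. You take \(E\) to be the Artin expansion closed on \(M\) strands, built in one sweep with an array of current strand endpoints in \(O(W\log(W+1))\) bit operations.

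You also observe that the last block \(\delta_M^{s_k}\) touches every strand position, so \(E(\mathcal P)\) has no crossing-free circles and a connected projection. This is correct and slightly sharper than the paper's generic remark that unused strands are retained.
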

\begin{proof}
For \(F\), first braid the diagram and then apply Algorithm~\ref{alg:complete}.
Theorem~\ref{thm:diagram} gives the time and binary-length bounds,
and \eqref{eq:braid-from-diagram} gives
\(p=O(N)\) and \(c=O(N^2)\). The output strand parameter is
\(M=O(N^5)\). Proposition~\ref{prop:writhe} and the complete-algorithm
discussion following it give
\[
 C(F(D))=O(p^2c^3)=O(N^8)
\]
when \(p\geq3\) and \(c\geq1\). The direct small-strand and empty-word
returns have polynomial expanded length as well: for empty words it
is \(O(p^2)\), and for at most two strands it is \(O(c+1)\).
Thus \(W(F(D))=O(N^8)\) in every case. The preservation of the
oriented link follows from the two stages of the construction. If a
unary output is requested, writing it takes additional time
\(O(U(F(D))\log(U(F(D))+1))=O(N^8\log(N+1))\), for example by
using binary counters, and remains polynomial by
\eqref{eq:transfer-size-comparison}.

For \(E\), expand each \(\delta_{r_j}^{s_j}\) as an Artin word;
negative powers use the reversed order of inverse generators. There
are exactly \(C(\mathcal P)\) crossing symbols. Construct the standard
braid diagram on \(M\) strands and close it by nested exterior arcs,
without adding crossings. Its crossing number as a diagram is
\(C(\mathcal P)\), and its number of components is at most \(M\).
Unused strands are retained as crossing-free components.

One can build the combinatorial diagram by keeping an array of the
current strand endpoints and a record of the half-edges at each
crossing. Each generator updates two endpoints and creates one
crossing record. Closing the strands and recording crossing-free
components take \(O(M)\) further operations. There are
\(O(C+M)=O(W)\) operations on labels of \(O(\log(W+1))\) bits.
This proves the bound for \(E\), as well as
\(N(E(\mathcal P))\leq C(\mathcal P)+M=W\), up to the fixed encoding
conventions. By \eqref{eq:transfer-size-comparison}, the same procedure
is polynomial on unary parameter input.
\end{proof}

\subsection{Properties as membership problems}
\label{subsec:property-transfer}

We first consider a property with a yes-or-no answer. We express a
complete classification as a membership criterion in the parameters
and compare its running time with that of deciding the property from
a diagram. Since the input is a presentation, different presentations
of the same oriented link must give the same answer.

Let \(\mathscr L\) denote the nonempty oriented link types in \(S^3\).
Fix a function
\[
 \chi:\mathscr L\longrightarrow\{0,1\}.
\]
Define the parameter set
\begin{equation}\label{eq:property-parameter-set}
 S_\chi=\{\mathcal P\in\mathscr G:
                   \chi(T(\mathcal P))=1\}.
\end{equation}
A complete classification gives a necessary and sufficient criterion
for membership in \(S_\chi\). We now compare the cost of testing this
criterion with the cost of deciding the property from a diagram.

\begin{theorem}\label{thm:decision-transfer}
The following are equivalent:
\begin{enumerate}
\item There is a uniform deterministic algorithm which, given
\(D\in\mathscr D\), returns \(\chi(L(D))\) in time polynomial in
\(N(D)\).
\item There is a uniform deterministic algorithm which, given
\(\mathcal P\in\mathscr G\), decides whether
\(\mathcal P\in S_\chi\) in time polynomial in \(W(\mathcal P)\).
\end{enumerate}
Equivalently, the decision problem on diagrams and the membership
problem on unary parameter lists are polynomial-time interreducible.
\end{theorem}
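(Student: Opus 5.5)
The plan is to prove both implications by composing the given algorithm with one of the translations \(F\) and \(E\) from Proposition~\ref{prop:translations}. Since \(\chi\) depends only on the oriented link type, correctness follows from \eqref{eq:translation-types}. The running time follows because polynomials compose: the translations are polynomial and their output sizes are polynomially bounded.

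\emph{(2) implies (1).} Given \(D\in\mathscr D\), compute \(F(D)\) in \(O(N^4\log(N+1))\) bit operations. Then run the membership algorithm on \(F(D)\). It runs in time \(\mathrm{poly}(W(F(D)))\), and by \eqref{eq:translation-output} this is \(\mathrm{poly}(N^8)=\mathrm{poly}(N)\).

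The membership algorithm is stated in terms of \(W\), which corresponds to unary input. So if it expects unary parameters, we first write out the unary list. This costs \(O(N^8\log(N+1))\) by the remark in the proof of Proposition~\ref{prop:translations} together with \eqref{eq:transfer-size-comparison}. It then answers whether \(F(D)\in S_\chi\), which means \(\chi(T(F(D)))=1\). Since \(T(F(D))\iso L(D)\), this equals \(\chi(L(D))\).

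\emph{(1) implies (2).} Given \(\mathcal P\in\mathscr G\), compute \(E(\mathcal P)\) in \(O(W\log(W+1))\) bit operations, where \(N(E(\mathcal P))=O(W)\). Then run the diagram algorithm, which takes time \(\mathrm{poly}(W)\). It returns \(\chi(L(E(\mathcal P)))=\chi(T(\mathcal P))\), which is exactly the indicator of \(\mathcal P\in S_\chi\).

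If the input is given in unary, its length \(U\) satisfies \(W\le U+U^2\), so the bound remains polynomial in the input length. The fact that \(E(\mathcal P)\) is a valid nonempty diagram in \(\mathscr D\) is part of Proposition~\ref{prop:translations}; it holds because \(M\ge2\).

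The main point to check is the bookkeeping of sizes and models rather than any new mathematics. The time bounds are stated in the bit-cost random-access model, and we pass to the standard machine model through the polynomial simulation noted at the start of this section. The size measures also differ: one side uses \(N\) and the other uses \(W\), with unary length \(U\) in between. Inequality \eqref{eq:transfer-size-comparison} together with \(W(F(D))=O(N^8)\) keeps all of these polynomially related.

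Finally, we note that well-definedness needs no uniqueness of presentations. The set \(S_\chi\) is defined through \(T(\mathcal P)\), so any two lists representing the same link automatically get the same answer. The interreducibility statement is just a restatement of the two reductions above.
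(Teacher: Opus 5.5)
Your proposal is correct and follows the paper's proof essentially step for step. For one direction you compose the membership algorithm with \(F\), using \(W(F(D))=O(N^8)\) and adding the unary-writing cost when needed; for the other you compose the diagram algorithm with \(E\), using \(N(E(\mathcal P))=O(W)\), with correctness in both directions coming from \eqref{eq:translation-types}.
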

\begin{proof}
Suppose first that the parameter algorithm has running time at most
\(R(W)\), where \(R\) is a polynomial. On input \(D\), compute
\(\mathcal P=F(D)\) and run the parameter algorithm on this list.
The answer is correct because
\[
 \chi(L(D))=\chi(T(F(D))).
\]
For a fixed constant \(A\), the running time is at most
\begin{equation}\label{eq:decision-composition-cost}
 O(N^4\log(N+1))+R(A(1+N)^8),
\end{equation}
which is polynomial in \(N\). If the parameter algorithm expects
unary input, include its \(O(N^8\log(N+1))\) conversion and writing
cost. This remains polynomial.

Conversely, suppose the diagram algorithm has running time at most
\(Q(N)\), with \(Q\) polynomial. On input \(\mathcal P\), construct
\(E(\mathcal P)\) and apply the diagram algorithm. By
\eqref{eq:translation-types}, the answer is
\(\chi(T(\mathcal P))\). For a fixed constant \(B\), the running time
is at most
\[
 O(W\log(W+1))+Q(BW),
\]
which is polynomial in \(W\), and hence polynomial in the unary input
length. The two constructions are the required reductions.
\end{proof}

For example, we may take \(\chi\) to be unknot recognition, unlink
recognition, or hyperbolicity. The theorem then says that the diagram
problem is solvable in polynomial time exactly when the corresponding
parameter membership problem is. To obtain an algorithm from a
classification, we still need the membership criterion and its
running-time bound.

\subsection{Finitely encoded invariants and auxiliary inputs}
\label{subsec:invariant-transfer}

We now consider invariants whose values are encoded by finite strings.
We fix the encoding so that the diagram and the parameter list ask
for the same answer. This includes integer-valued invariants and
homology groups given by a specified table of degrees, ranks, and
torsion invariant factors. We include the time needed to write the
answer.

Let \(\Sigma\) be a fixed finite alphabet, and let
\[
 I:\mathscr L\longrightarrow\Sigma^*
\]
be a fixed invariant with the chosen encoding. Define
\(I_{\mathscr D}(D)=I(L(D))\) and
\(I_{\mathscr G}(\mathcal P)=I(T(\mathcal P))\).

\begin{theorem}\label{thm:invariant-transfer}
The function \(I_{\mathscr D}\) is computable in time polynomial in
\(N(D)\) if and only if \(I_{\mathscr G}\) is computable in time
polynomial in \(W(\mathcal P)\), equivalently in polynomial time on
unary parameter input. The algorithms in the two directions return
the same encoded value.
\end{theorem}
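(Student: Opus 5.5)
The argument follows the proof of Theorem~\ref{thm:decision-transfer}, with the maps \(F\) and \(E\) of Proposition~\ref{prop:translations} serving as the two reductions. The only new ingredient is the cost of the answer itself. The output is a string, and the running time must cover writing it, so I would also verify that the output length cannot spoil the polynomial bounds. Because \(I\) is a function on link types, every representative of a link gets the same encoded value. This is immediate from \eqref{eq:translation-types}: \(I_{\mathscr D}(D)=I(L(D))=I(T(F(D)))=I_{\mathscr G}(F(D))\) and \(I_{\mathscr G}(\mathcal P)=I(T(\mathcal P))=I(L(E(\mathcal P)))=I_{\mathscr D}(E(\mathcal P))\). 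In particular, the algorithms in the two directions return literally the same string, with no re-encoding needed.

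For the forward direction, suppose \(I_{\mathscr G}\) is computed in time at most \(R(W)\). On input \(D\), I compute \(\mathcal P=F(D)\) in \(O(N^4\log(N+1))\) bit operations. If the parameter algorithm expects unary input, I also convert to unary in time \(O(N^8\log(N+1))\). I then run the parameter algorithm. Since \(W(F(D))\le A(1+N)^8\), the total time is at most \(O(N^8\log(N+1))+R(A(1+N)^8)\), which is polynomial in \(N\). Any algorithm in time \(R(W)\) writes at most \(R(W)\) output symbols, so the cost of writing the answer is already included in this bound.

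For the converse, suppose \(I_{\mathscr D}\) is computed in time at most \(Q(N)\). On input \(\mathcal P\), I compute \(E(\mathcal P)\) in \(O(W\log(W+1))\) bit operations. This gives a diagram with \(N(E(\mathcal P))=O(W)\), and the diagram algorithm then runs in time \(Q(BW)\). The equivalence between polynomial time in \(W\) and polynomial time on unary input follows from \eqref{eq:transfer-size-comparison}: \(U\le4W\) and \(W\le U+U^2\). Polynomial dependence is therefore preserved when passing between the two measures.

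I expect the main obstacle to be making sure the machine model and the output convention are consistent across the composition, rather than any mathematical difficulty. I would handle this by appealing to the multitape simulation remark at the start of Section~\ref{sec:transfer}, which lets both bit-cost bounds be combined as ordinary polynomial-time reductions. I would also state explicitly that the output alphabet \(\Sigma\) and the encoding of \(I\) are fixed independently of the input type.
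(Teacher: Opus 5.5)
Your proposal is correct and takes the same route as the paper. It derives the identities \(I_{\mathscr D}=I_{\mathscr G}\circ F\) and \(I_{\mathscr G}=I_{\mathscr D}\circ E\) from \eqref{eq:translation-types}, then composes the polynomially bounded translations of Proposition~\ref{prop:translations} with the evaluation algorithm exactly as in the proof of Theorem~\ref{thm:decision-transfer}; your remark that the output length is bounded by the running time just makes explicit the paper's convention that writing the answer is part of the cost.
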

\begin{proof}
The identities
\[
 I_{\mathscr D}=I_{\mathscr G}\circ F,\qquad
 I_{\mathscr G}=I_{\mathscr D}\circ E
\]
follow from \eqref{eq:translation-types}, since the invariant depends
only on the link type. By Proposition~\ref{prop:translations}, both
translations and their output sizes are polynomially bounded. We
compose with the evaluation algorithm as in the preceding proof.
Neither translation changes or expands the encoded invariant value.
\end{proof}

Together, Theorems~\ref{thm:decision-transfer} and
\ref{thm:invariant-transfer} prove
Theorem~\ref{thm:intro-transfer}.
The output encoding remains part of the problem: we use the
same prescribed representation of the invariant value for
diagram and parameter inputs, and the running time includes
writing that representation. Tasks allowing several valid
answers are covered by Proposition~\ref{prop:auxiliary-transfer},
provided that the condition defining acceptable outputs depends
only on the oriented link type and the auxiliary input.

The same reasoning allows additional data, such as a degree in which
an invariant is requested. More generally, fix an isotopy-invariant
class \(\mathscr C\subseteq\mathscr L\) and an output condition
\(\mathcal R(L,z,y)\), where \(z,y\in\Sigma^*\). The condition
depends on \(L\) only through its link type. The task is to produce
an admissible \(y\) from a presentation of \(L\in\mathscr C\) and
an auxiliary input \(z\); any further existence promise is part of
the task.

\begin{proposition}
\label{prop:auxiliary-transfer}
The task defined by \(\mathcal R\) is solvable on diagram input in
time polynomial in \(N(D)+|z|\) if and only if it is solvable on
parameter input in time polynomial in \(W(\mathcal P)+|z|\).
The assertion holds with the promise \(L\in\mathscr C\).
\end{proposition}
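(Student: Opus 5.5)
The plan is to run the argument of Theorems~\ref{thm:decision-transfer} and \ref{thm:invariant-transfer} again, this time carrying the auxiliary input \(z\) through both reductions unchanged. The two translations \(F\) and \(E\) of Proposition~\ref{prop:translations} touch only the link description. The auxiliary string is copied verbatim alongside, at cost \(O(|z|)\). Since \(\mathcal R(L,z,y)\) depends on \(L\) only through its oriented link type, and \eqref{eq:translation-types} gives \(T(F(D))\iso L(D)\) and \(L(E(\mathcal P))\iso T(\mathcal P)\), an output \(y\) is admissible for \((D,z)\) exactly when it is admissible for \((F(D),z)\). Similarly, \(y\) is admissible for \((\mathcal P,z)\) exactly when it is admissible for \((E(\mathcal P),z)\). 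The promise \(L\in\mathscr C\) also transfers, because \(\mathscr C\) is isotopy-invariant. Hence any valid input to one task is mapped to a valid input to the other, and the solver's output is returned unchanged.

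For the direction from parameters to diagrams, suppose the parameter algorithm runs in time at most \(R(W+|z|)\) with \(R\) polynomial. On input \((D,z)\), compute \(\mathcal P=F(D)\) in \(O(N^4\log(N+1))\) bit operations. By \eqref{eq:translation-output}, \(W(\mathcal P)\leq A(1+N)^8\), so the total time is \(O(N^4\log(N+1))+O(|z|)+R(A(1+N)^8+|z|)\). This is polynomial in \(N+|z|\). Without loss of generality \(R\) is monotone, which I will assume so that the bound on \(W\) can be substituted into \(R\).

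For the converse, suppose the diagram algorithm runs in time at most \(Q(N+|z|)\). On input \((\mathcal P,z)\), construct \(E(\mathcal P)\) in \(O(W\log(W+1))\) operations. Since \(N(E(\mathcal P))\leq BW\), the total time is \(O(W\log(W+1))+O(|z|)+Q(BW+|z|)\), which is polynomial in \(W+|z|\). By \eqref{eq:transfer-size-comparison}, this is equivalent to polynomial time on unary parameter input together with \(z\).

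I do not expect a genuine obstacle here. The one point to state carefully is that admissibility is preserved in both directions even when several answers \(y\) are valid. The reduction returns whatever \(y\) the solver outputs, and that \(y\) is acceptable because \(\mathcal R\) sees only the link type and \(z\). This is also where the hypothesis that \(\mathcal R\) depends on \(L\) only through its isotopy class is used.
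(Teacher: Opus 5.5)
Your proposal is correct and follows the paper's proof: it uses the translations \((D,z)\mapsto(F(D),z)\) and \((\mathcal P,z)\mapsto(E(\mathcal P),z)\), which preserve the auxiliary string, the link type, the promise \(L\in\mathscr C\) and hence the admissible outputs, and then composes with the given solver using the polynomial size and time bounds of Proposition~\ref{prop:translations}. Your explicit running-time estimates, including the monotonicity remark for \(R\), simply spell out what the paper leaves implicit.
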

\begin{proof}
We use the translations \((D,z)\mapsto(F(D),z)\) and
\((\mathcal P,z)\mapsto(E(\mathcal P),z)\). They preserve the
auxiliary string and the link type, and hence also the input promise
and the admissible outputs. Their time and output-size bounds are
polynomial in the respective input sizes. Composing with the
algorithm for either description proves the result. Since membership
in \(\mathscr C\) is an input promise, no recognition procedure for
this class is required.
\end{proof}

\subsection{Real-valued invariants and volume}
\label{subsec:real-transfer}

We next consider real-valued invariants, such as volume, with a
specified accuracy. We use absolute error and a requested number of
binary digits, asking for the same approximation from either a
diagram or a parameter list.

Fix a real-valued invariant
\(V:\mathscr C\longrightarrow\mathbb R\) on an isotopy-invariant
class \(\mathscr C\). A request of precision \(b\geq1\) asks for
an integer \(m\) such that
\begin{equation}\label{eq:real-accuracy}
 \left|2^{-b}m-V(L)\right|\leq2^{-b}.
\end{equation}
We return the signed binary integer \(m\), with denominator specified
by \(b\). The precision request is supplied in unary as \(1^b\).
Thus, polynomial time is measured in the presentation size and in
\(b\), rather than in \(\log b\).

\begin{corollary}\label{cor:real-transfer}
There is a uniform algorithm producing an approximation
\eqref{eq:real-accuracy} from \((D,1^b)\) in time polynomial in
\(N(D)+b\) if and only if there is such an algorithm from
\((\mathcal P,1^b)\) in time polynomial in \(W(\mathcal P)+b\).
Both statements have the promise that the represented link lies in
\(\mathscr C\).
\end{corollary}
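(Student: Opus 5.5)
The plan is to deduce Corollary~\ref{cor:real-transfer} from Proposition~\ref{prop:auxiliary-transfer}. We take the auxiliary string to be \(z=1^b\) and use the output condition
\[
 \mathcal R(L,1^b,y)\iff y \text{ encodes an integer } m \text{ with } \bigl|2^{-b}m-V(L)\bigr|\leq2^{-b}.
\]
Since \(V\) is defined on the isotopy-invariant class \(\mathscr C\) and depends only on the link type, \(\mathcal R\) depends on \(L\) only through its type. For any other auxiliary string \(z\) we declare no output admissible, or, equivalently, we restrict the promise to inputs with \(z\in\{1\}^*\). An integer \(m=\lfloor 2^bV(L)\rfloor\) always satisfies the condition, so the task is well posed on the promise \(L\in\mathscr C\). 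Because \(|z|=b\), the size measures \(N(D)+|z|\) and \(W(\mathcal P)+|z|\) in Proposition~\ref{prop:auxiliary-transfer} are exactly \(N(D)+b\) and \(W(\mathcal P)+b\).

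To keep the argument self-contained, I would also spell out the two reductions directly.

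\textbf{From parameters to diagrams.} Suppose an algorithm \(\mathsf A_{\mathscr G}\) runs in time \(R(W+b)\). Given \((D,1^b)\), compute \(\mathcal P=F(D)\). By Proposition~\ref{prop:translations} this takes \(O(N^4\log(N+1))\) bit operations and gives \(W(\mathcal P)=O(N^8)\), and \(T(\mathcal P)\iso L(D)\). Since \(\mathscr C\) is isotopy invariant, the promise \(L(D)\in\mathscr C\) transfers to \(T(\mathcal P)\in\mathscr C\). Now run \(\mathsf A_{\mathscr G}\) on \((\mathcal P,1^b)\). The returned \(m\) satisfies \eqref{eq:real-accuracy} for \(T(\mathcal P)\), hence for \(L(D)\), because \(V\) takes the same value on both. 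The total time is \(O(N^4\log(N+1))+R(A(1+N)^8+b)\), which is polynomial in \(N+b\). The string \(1^b\) is simply copied, at cost \(O(b)\).

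\textbf{From diagrams to parameters.} Symmetrically, suppose \(\mathsf A_{\mathscr D}\) runs in time \(Q(N+b)\). Given \((\mathcal P,1^b)\), compute \(E(\mathcal P)\) in \(O(W\log(W+1))\) time, with \(N(E(\mathcal P))=O(W)\), and run \(\mathsf A_{\mathscr D}\) on it. The total time is \(O(W\log(W+1))+Q(BW+b)\).

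The only point needing care is that the precision parameter is left untouched by both translations. If we instead had to strengthen the precision, say to \(b+1\), to absorb some error, the argument would still work, but here no extra error is introduced: both translations preserve the link type exactly, so \(V\) is unchanged. Polynomial dependence on \(b\) is preserved because \(b\) enters additively and \(1^b\) is supplied in unary, so composing polynomials in the sum \(N+b\) (respectively \(W+b\)) remains polynomial.
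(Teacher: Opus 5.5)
Your proposal is correct and follows the paper's proof. The paper likewise applies Proposition~\ref{prop:auxiliary-transfer} with \(z=1^b\) and \(\mathcal R\) given by \eqref{eq:real-accuracy}, observing that neither translation changes \(b\), \(V(L)\), or the required error; your explicit composition bounds and the well-posedness check via \(m=\lfloor 2^bV(L)\rfloor\) simply unpack that argument.
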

\begin{proof}
Apply Proposition~\ref{prop:auxiliary-transfer} with \(z=1^b\)
and with \(\mathcal R(L,1^b,m)\) given by
\eqref{eq:real-accuracy}. Neither translation changes \(b\),
\(V(L)\), or the required error. Hence no loss of precision is
introduced by converting the presentation.
\end{proof}

For hyperbolic volume, let \(\mathscr C\) be the links with hyperbolic
complement and let \(V\) be their volume. A uniformly efficient
parameter formula would give an approximation of the same accuracy
from any diagram satisfying this promise. If we want a problem
defined on all links, we must specify a value or a distinguished
output for nonhyperbolic links and include it in the parameter
procedure. The transfer then applies to this total task. The
corollary assumes neither a volume formula nor an algorithm for
deciding hyperbolicity.

If a real invariant is represented exactly by some finite symbolic
encoding, Theorem~\ref{thm:invariant-transfer} applies to that
specified symbolic task. Such an encoding and its interpretation
must be fixed separately from the approximation convention above.

\subsection{Binary parameter lists and effective classifications}
\label{subsec:binary-classifications}

We finish this section with the case of a classification or formula
which is evaluated in time polynomial in the binary parameter length.
This gives a polynomial-time algorithm on diagrams. The hypothesis
on the parameter procedure is stronger than the expanded-size
hypothesis used in the preceding equivalences.

\begin{corollary}\label{cor:binary-transfer}
Let a fixed link property or a fixed finitely encoded link
invariant admit a uniform deterministic parameter procedure
with running time \(R(\ell(\mathcal P))\), where \(R\) is
polynomial. It then admits a polynomial-time algorithm on
diagram input. More generally, a task with auxiliary input
\(z\) that is solvable in time
\(R(\ell(\mathcal P)+|z|)\) is solvable from \((D,z)\)
in time polynomial in \(N(D)+|z|\). In particular, an
approximation algorithm polynomial in \(\ell(\mathcal P)+b\)
yields a diagram algorithm polynomial in \(N(D)+b\).

In each case, it suffices that the parameter procedure be
correct and satisfy the stated running-time bound on
\(F(\mathscr D)\), with the corresponding input promises.
\end{corollary}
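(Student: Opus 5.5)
The plan is to compose the translation \(F\) of Proposition~\ref{prop:translations} with the given parameter procedure, and to bound the cost using only the binary length of \(F(D)\). On input \(D\in\mathscr D\), together with the auxiliary string \(z\) when present, I first compute \(\mathcal P=F(D)\). By Proposition~\ref{prop:translations} this costs \(O(N^4\log(N+1))\) bit operations, and the output satisfies \(\ell(\mathcal P)=O(N^2\log(N+1))\) by \eqref{eq:translation-output}. I then run the parameter procedure on \(\mathcal P\), or on \((\mathcal P,z)\), and return its answer unchanged.

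Correctness comes from \(T(F(D))\iso L(D)\) in \eqref{eq:translation-types}. The property or invariant depends only on the oriented link type, so \(\chi(L(D))=\chi(T(F(D)))\) and \(I(L(D))=I(T(F(D)))\). For the auxiliary-input version, the condition \(\mathcal R\) also depends on \(L\) only through its link type, so an admissible output for \((F(D),z)\) is admissible for \((D,z)\). Any input promise, such as \(L\in\mathscr C\), passes through the conversion because \(F\) preserves the link type. This step also proves the final sentence of the corollary: the procedure is only ever called on lists in \(F(\mathscr D)\), and the promise holds for exactly these inputs, so correctness and the time bound are needed only there.

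For the running time, I pick a constant \(A\) with \(\ell(F(D))\leq A(1+N)^3\), which is possible since \(\ell(F(D))=O(N^2\log(N+1))\). The total cost is then at most
\[
 O(N^4\log(N+1))+R\bigl(A(1+N)^3+|z|\bigr),
\]
and this is polynomial in \(N+|z|\) because \(R\) is a polynomial and we may assume it is monotone. The approximation statement is the special case \(z=1^b\) with \(\mathcal R\) given by \eqref{eq:real-accuracy}, where \(|z|=b\). The answer string is produced by the parameter procedure itself, so the cost of writing the output is already included in \(R\).

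I do not expect a real obstacle here. The one point to make explicit is that the time bound is in terms of the binary length \(\ell\), not \(W\). This is what lets the argument avoid the \(O(N^8)\) bound on the expanded size: only the polynomial bound \eqref{eq:translation-output} on \(\ell(F(D))\) is used. A second small point is the reduction to diagram input size. Proposition~\ref{prop:translations} gives bit-operation bounds in the random-access model, and the polynomial simulation remark at the start of Section~\ref{sec:transfer} converts these to time polynomial in the encoded input length, which is polynomially related to \(N(D)\).
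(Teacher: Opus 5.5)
Your proposal is correct and follows essentially the same route as the paper. You compute \(F(D)\), run the parameter procedure on it (retaining \(z\), with \(z=1^b\) for approximation), and bound the total time by \(O(N^4\log(N+1))+R(\cdot)\) using only the binary-length bound \eqref{eq:translation-output}; correctness and the input promise carry over because \(F\) preserves the oriented link type. Your cruder estimate \(A(1+N)^3\) in place of the paper's \(A(1+N)^2\log(N+1)\) changes nothing.
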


\begin{proof}
Compute \(F(D)\) and then apply the parameter procedure,
retaining any auxiliary input. Take \(z\) to be the empty
string when there is no auxiliary input.
By \eqref{eq:translation-output}, the total time is bounded by
\begin{equation}\label{eq:binary-transfer-time}
 O(N^4\log(N+1))+
 R\bigl(A(1+N)^2\log(N+1)+|z|\bigr)
\end{equation}
for a fixed constant \(A\). This is polynomial in \(N+|z|\).
All lists passed to the procedure belong to \(F(\mathscr D)\).
Thus, correctness and the stated running-time bound are
required only on this image, with the corresponding input
promises. No procedure for recognising the image is needed.
For approximation, take \(z=1^b\).
\end{proof}

Corollary~\ref{cor:binary-transfer} proves
Corollary~\ref{cor:intro-binary-transfer}. It does not assert a
two-way equivalence for arbitrary binary parameter input. Indeed,
the admissible lists
\[
 \mathcal P_t=((2,2^t+1)),\qquad t\geq1,
\]
have \(\ell(\mathcal P_t)=O(t)\), whereas
\(C(\mathcal P_t)=2^t+1\). Their standard Artin expansions cannot
be written in polynomial time in \(t\). Thus the expansion used
for the converse in Theorem~\ref{thm:decision-transfer} is not a
polynomial translation for succinct binary input. A converse in
that model would require an additional argument.

For a property \(\chi\), an explicit classification might have the
form
\[
 \mathcal P\in S_\chi\quad\Longleftrightarrow\quad
 \Phi_\chi(r_1,s_1,\ldots,r_k,s_k).
\]
For an invariant, it might give an expression for
\(I(T(\mathcal P))\), or for an approximation to \(V(T(\mathcal P))\).
The classification establishes the correct answer. To determine the
running time, we must bound the cost of testing \(\Phi_\chi\) or
evaluating the invariant expression. Polynomially many operations on integers
of polynomially bounded bit length give a polynomial bound when
each operation itself has polynomial bit complexity. A compact
formula containing an exponential-size sum or an unbounded search
does not by itself give such a bound. A bound proved for each fixed
number of pairs must also be made uniform when \(k\) varies.

For an infinite table, we must specify how to find the entry for the
supplied parameters. Unit-cost access to arbitrary entries would
give an oracle model. A table with one representative per link type
also requires a way to identify that representative. Our conversion
does not assert canonicity, so the procedure must be correct for
every input in its domain, including different lists representing
the same link.

Thus, to obtain an algorithm for general diagrams, it remains to
find an efficiently testable classification or an efficiently
evaluable invariant formula in the parameters. It suffices to do
this on the output class of the conversion. Composing with \(F\)
then gives the required polynomial-time computation. The conversion
translates the presentation, and the classification or formula
determines the property or invariant.

\section*{Acknowledgements and use of AI tools}

The authors formulated the research problem, proposed the main
ideas and general approach, and guided the development and
refinement of the construction.

A large language model was used to assist with algebraic
calculations, explicit parameter transformations, and the
organisation and checking of intermediate steps. It also
assisted with refining the construction and its proofs,
checking the bounds on running time, storage, and presentation
size, preparing computational checks, and drafting and revising
the manuscript.

The authors retain full responsibility for the final
mathematical content, the verification of the arguments
and computations, and the accuracy of the references.

\bibliographystyle{plain}
\bibliography{Generalised_T_Links_26_Pages}
\end{document}